\documentclass[11pt,reqno,a4paper]{amsart}
 \usepackage{amsgen, amstext,amsbsy,amsopn, amsthm, amsfonts,amssymb,amscd,amsmath,euscript,enumerate,url,verbatim,calc,tikz}
\usepackage{hyperref}
\usepackage{MnSymbol}
\usepackage{pgfkeys}
\usepackage{mathtools}
\usepackage{etex}
 \RequirePackage{etex}
\usepackage[left=1.2in,right=1.2in,bottom=1.5in]{geometry}

\usepackage{thmtools, thm-restate}
\usepackage{pst-node}

\usepackage{mathrsfs}
\usepackage{tikz-cd}

\usepackage{tikz}
\usepackage{tikz-network}
\usepackage{eqnarray,amsmath}

\def\multiset#1#2{\ensuremath{\left(\kern-.3em\left(\genfrac{}{}{0pt}{}{#1}{#2}\right)\kern-.3em\right)}}

\usetikzlibrary{arrows}

 \usepackage{latexsym}
 \usepackage{graphics}
 \usepackage{color}
\usepackage{lastpage}
\usepackage{fancyhdr}
\usepackage{multirow}
\usepackage[T1]{fontenc}
\usepackage[utf8]{inputenc}
\allowdisplaybreaks
\usepackage{graphicx}
\graphicspath{ {F:/IMAGES/} }

\makeatletter
\def\oversortoftilde#1{\mathop{\vbox{\m@th\ialign{##\crcr\noalign{\kern3\p@}%
      \sortoftildefill\crcr\noalign{\kern3\p@\nointerlineskip}%
      $\hfil\displaystyle{#1}\hfil$\crcr}}}\limits}

\def\sortoftildefill{$\m@th \setbox\z@\hbox{$\braceld$}%
  \braceld\leaders\vrule \@height\ht\z@ \@depth\z@\hfill\braceru$}

\makeatother

\newcommand{\proset}{\,\mathrel{\lower 4pt\hbox{$\scriptscriptstyle/$}
\mkern -14mu\subseteq }\,} %for proper subset

 \newtheorem{theorem}{Theorem}[section]
 \newtheorem{corollary}[theorem]{Corollary}
 \newtheorem{lemma}[theorem]{Lemma}
 \newtheorem{proposition}[theorem]{Proposition}

\usepackage{amsmath}

\usepackage{hyperref}

 \theoremstyle{definition}

 \newtheorem{definition}[theorem]{Definition}

 \newtheorem{example}[theorem]{Example}

\title[A comparison of the v-number of a monomial ideal and its integral closure]{A comparison of the v-number of a monomial ideal and its integral closure}

\author{Prativa Biswas, Mousumi Mandal and Partha Phukan}

\thanks{AMS Classification 2020:  13A15, 13B22, 13F55, 05C25, 05E40}
\thanks{Key words and phrases: v-number, edge ideals, monomial ideals,  integral closure}
\address{Department of Mathematics, Indian Institute of Technology Kharagpur, 721302, India}\email{prativabiswassnts@kgpian.iitkgp.ac.in}
\address{Department of Mathematics, Indian Institute of Technology Kharagpur, 721302, India}\email{mousumi@maths.iitkgp.ac.in}
\address{Department of Mathematics, Indian Institute of Technology Kharagpur, 721302, India}\email{p.partha.24@kgpian.iitkgp.ac.in}

\begin{document}

\maketitle

\begin{abstract}
Let $I$ be a monomial ideal in a standard graded polynomial ring and let $\overline{I}$ denote its integral closure. We study the relationship between $\mathrm{v}(I)$ and $\mathrm{v}(\overline{I})$. We prove that
\(
\mathrm{v}(\overline{I}) \leq \mathrm{v}(I)
\)
for monomial ideals in two variables, for equigenerated monomial ideals in three variables and for several special classes of monomial ideals, while providing examples showing that this inequality does not hold in general. For the edge ideal $I(G)$ of a connected graph $G$, we show that
\(
\mathrm{v}(I(G)^k)=\mathrm{v}(\overline{I(G)^k})
=
2k-1
\)
for all $k \geq 1+|E(G)|$. Moreover, when $G$ is disconnected, we prove that
\(
\mathrm{v}(\overline{I(G)^k})\leq\mathrm{v}({I(G)^k})
\)
for all sufficiently large $k$.
\end{abstract}

\section{Introduction}

\noindent Let $R=K[x_1,\ldots,x_n]$ be a standard graded polynomial ring over a field
$K$, and let $I\subseteq R$ be a homogeneous ideal. The $\mathrm{v}$-number
of $I$ is defined by
\[
\mathrm{v}(I)
=
\min\left\{
d\geq 0 :
\begin{array}{l}
\text{there exist } f\in R_d \text{ and }
\mathfrak{p}\in\operatorname{Ass}(I)\\
\text{such that } (I:f)=\mathfrak{p}
\end{array}
\right\}.
\]
Thus, the $\mathrm{v}$-number records the least degree of a homogeneous
element whose colon ideal with $I$ is an associated prime. This invariant
has been studied extensively for graded ideals, particularly for monomial
ideals and ideals arising from graphs.

In this paper, we study the behaviour of the $\mathrm{v}$-number under
integral closure. Even for monomial ideals, determining the integral
closure explicitly can be difficult. However, the integral closure of a
monomial ideal admits a useful geometric description in terms of its
Newton polyhedron. More precisely, if $I\subseteq R$ is a monomial ideal
and
\(
E(I)=\{\mathbf{a}\in\mathbb{N}^n : x^{\mathbf{a}}\in I\}
\)
denotes its exponent set, \(\operatorname{conv}(E(I))\) is the convex hull of \(E(I)\) in  \(\mathbb{R}^{n}\) then the Newton polyhedron of $I$ is defined by
\[
\operatorname{NP}(I)
=
\operatorname{conv}(E(I))
=
\operatorname{conv}
\{\mathbf{a}\in\mathbb{N}^n : x^{\mathbf{a}}\in I\}.
\]
Moreover,
\[
x^{\mathbf{a}}\in\overline{I}
\quad\Longleftrightarrow\quad
\mathbf{a}\in\operatorname{NP}(I)\cap\mathbb{N}^n.
\]
We denote by $\alpha(I)$ the initial degree of $I$, defined by
\(
\alpha(I)
=
\min\{\deg(f)\mid 0\neq f\in I , \text{ is homogeneous}\}
\). Let \(\mathcal{G}(I)\) be the unique minimal set of monomial generators of \(I\). We denote by \(V(I)\) the minimal subset of \(E(\mathcal{G}(I))\) such that
\(
\operatorname{NP}(I)=\operatorname{conv}(V(I))+\mathbb{R}_{\geq 0}^{n},
\)
and call \(V(I)\) the set of vertices, or equivalently, the set of extreme points of
\(\operatorname{NP}(I)\). By \cite[ Lemma~2.4]{Hoa22}, for every
\(\mathbf{a}=(a_1,\ldots,a_n)\in V(I)\), the monomial
\(x_1^{a_1}\cdots x_n^{a_n}\) is a minimal generator of
\(\overline{I}\). This connection between integral closure and convex geometry makes it
natural to compare $\mathrm{v}(I)$ and $\mathrm{v}(\overline{I})$.
In this direction, Vanmathi.~A and P.~Sarkar \cite{VanmathiSarkar25}
established comparison results for the $\mathrm{v}$-number of complete
intersection monomial ideals and equigenerated irreducible monomial ideals
with that of their integral closures.

A related comparison problem has been considered for
Castelnuovo-Mumford regularity. In \cite{KuronyaPintye}, A. K\"uronya and N. Pintye conjectured that
taking integral closure does not increase regularity, that is,
\(
\operatorname{reg}(\overline{I})
\leq
\operatorname{reg}(I).
\)
The inequality is known for several classes of monomial ideals in \cite{Javadekar26},\cite{KumarKumar21},\cite{MinhVu22}, although
a counterexample to the conjecture has recently been obtained by S. Misra in \cite{Misra26}. Motivated
by this problem, we ask whether the analogous inequality
\(
\mathrm{v}(\overline{I})\leq \mathrm{v}(I)
\)
holds for monomial ideals. Our first main result gives an affirmative answer for monomial ideals in
two variables. More precisely, in Theorem \ref{2variablesproof} for every monomial ideal
$I\subseteq K[x,y]$, we prove that 
\[
\mathrm{v}(\overline{I})\leq \mathrm{v}(I).
\]
The proof uses the explicit description of the $\mathrm{v}$-number of
monomial ideals in two variables. In Theorem~\ref{thm:main-bound}, we proved that if \(I \subseteq K[x,y,z]\) is an equigenerated monomial ideal in degree \(d\), then
\[
\mathrm{v}(\overline{I^k}) \le \mathrm{v}(I^k)
\]
for all \(k \ge 1\). However, this inequality does not hold in general. We
provide explicit examples in three variables, including an
$\mathfrak{m}$-primary monomial ideal and a stable monomial ideal, as well as examples in four variables including equigenerated monomial ideals for which
\(
\mathrm{v}(I)<\mathrm{v}(\overline{I}).
\)

 Although the inequality fails for arbitrary monomial ideals, we establish
positive results for several important classes. In particular, we prove \(
\mathrm{v}(\overline{I})\leq \mathrm{v}(I)
\) for equigenerated $\mathfrak{m}$-primary ideals,
monomial ideals generated by pure powers of variables, and equigenerated
stable monomial ideals. We then focus on powers of edge ideals. Let $G$ be a simple connected
graph and let $I(G)$ denote its edge ideal. In Theorem \ref{prop:v-stability} we prove that
\[
\mathrm{v}\!\left(\overline{I(G)^k}\right)
=
\mathrm{v}\!\left(I(G)^k\right)
=
2k-1
\]
for every
\(
k\geq 1+|E(G)|.
\)
Thus, although $I(G)^k$ and $\overline{I(G)^k}$ may be different ideals,
their $\mathrm{v}$-numbers eventually agree, and the stage of equality is
given by an explicit bound depending only on the number of edges of $G$. The disconnected case behaves differently. We give an example showing
that the inequality can be strict. Nevertheless, in Theorem \ref{v-numberofdisconnectedgraphs} we prove that, if $G$ is disconnected
and has at least one non-bipartite connected component, then for
all sufficiently large $k$,
\[
\mathrm{v}\!\left(\overline{I(G)^k}\right)
\leq
\mathrm{v}\!\left(I(G)^k\right).
\]
More precisely, if $b$ denotes the number of bipartite connected
components and $c(G)$ the total number of connected components, then
\[
2k-1
\leq
\mathrm{v}\!\left(\overline{I(G)^k}\right)
\leq
2k+b-1
\leq
\mathrm{v}\!\left(I(G)^k\right)
=
2k+c(G)-2
\]
for all sufficiently large $k$.

\section{Preliminaries}

\noindent This section revisits fundamental concepts, terminology and results that will be used throughout the article. For unexplained terminology, we refer the interested reader to standard texts  \cite{HerzogHibi11} \cite{HunekeSwanson06}.

\begin{comment}

Let \(R = K[x_1, \dots, x_n]\) be a standard graded polynomial ring over a field \(K\), and let \(I \subseteq R\) be a homogeneous ideal. A key invariant associated with \(I\) is its \(\mathrm{v}\)-number. Although the \(\mathrm{v}\)-number of \(I\) has been thoroughly studied, extending such investigations to the integral closure of \(I\) poses substantially greater challenges. The integral closure of \(I\) often displays much richer algebraic behaviour. Even when the generators of a monomial ideal are explicitly known, describing the generators of its integral closure remains a difficult task, both computationally and theoretically.

Let \(I \subseteq R\) be a monomial ideal. For \(\mathbf{a} = (a_1, \dots, a_n) \in \mathbb{N}^n\), let \(x^{\mathbf{a}} = x_1^{a_1} \cdots x_n^{a_n}\) denote the corresponding monomial in \(R\). The Newton polyhedron of \(I\) is defined as
\[
\operatorname{NP}(I) = \operatorname{conv}\{\mathbf{a} \in \mathbb{N}^n \mid x^{\mathbf{a}} \in I\},
\]
i.e., the convex hull of all exponent vectors of monomials in \(I\). Moreover, for \(\mathbf{a} \in \mathbb{N}^n\), one has \(x^{\mathbf{a}} \in \overline{I}\) if and only if \(\mathbf{a} \in \operatorname{NP}(I)\), where \(\overline{I}\) denotes the integral closure of \(I\).

\end{comment}

\begin{definition}
Let \(I,J\subseteq R\) be two ideals. Then 
\[
(I:J)\coloneqq \{f\in R\mid fg\in I \text{ for all } g\in J\}
\]
is an ideal of \(R\) known as the colon ideal of \(I\) with respect to \(J\). For \(f\in R\) we write \((I:f)\coloneqq (I:(f))\). By \cite[Proposition 1.2.2]{HerzogHibi11}, for a monomial ideal \(I\subseteq R\) and a monomial \(f\in R\), we have
\[
(I:f) = \left(\frac{u}{\gcd(u,f)}\mid u\in \mathcal{G}(I)\right).
\]
\end{definition}

\begin{definition}

Let \(I\) be an ideal in a ring \(R\). An element \(x \in R\) is called integral over \(I\) if there exists an integer \(m \geq 1\) and elements \(a_i \in I^i\) for \(1 \leq i \leq m\) such that
\[
x^{m} + a_{1}x^{m - 1} + \dots +a_{m - 1}x + a_{m} = 0.
\]
The integral closure of \(I\), denoted by \(\overline{I}\), is an ideal in \(R\), which is the set of all elements in \(R\) that are integral over \(I\) .

\end{definition}

\begin{definition}
    A (simple) \textit{graph} ${G}$ is defined by a pair $(V({G}), E({G}))$, where $V({G})$ is a finite set, called the \textit{vertex set} of $G$ and  $E({G})$, called the \textit{edge set} of $G$, is the family of subsets of $V(G)$ such that they are pairwise incomparable with respect to inclusion and the cardinality of each element of $E({G})$ is two. 
\end{definition}

Suppose ${G}$ is a graph with $V({G})=\{x_1,x_2,\ldots,x_n\}$ and $E({G})$ as the edge set. We consider each vertex $x_i$ as a variable of the polynomial ring $R=K[x_1,x_2,\ldots,x_n]$ in $n$ variables over a field $K$. We can assign a monomial ideal to $G$ called the \textit{edge ideal} of $G$, denoted by $I(G)$, in the following way:
    $$I({G}):=\langle\prod_{x_i\in e}x_i~|~e\in E({G})\rangle.$$

\begin{definition}
    A \textit{cycle} is a simple graph with the same number of vertices and edges whose vertices can be placed around a circle so that two vertices are adjacent if they appear consecutively along the circle. A cycle with $n$ vertices is denoted by $C_n$.
\end{definition}
\begin{definition}
    A simple graph $G$ is said to be a \textit{bipartite} graph if its vertex set $V(G)$ can be partitioned into two non-empty subsets $X$ and $Y$ such that every edge in $G$ has one endpoint in $X$ and the other endpoint in $Y$. Otherwise it is a \textit{non-bipartite} graph
\end{definition}

\begin{lemma}[{\cite[Theorem 5.9]{SimisVasconcelosVillarreal94}}]
\label{lem:normal-torsion-free-bipartite}
Let \(G\) be a graph. Then \(I(G)\) is normally torsion-free if and only if \(G\) is bipartite.
\end{lemma}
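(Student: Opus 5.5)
This lemma is quoted from \cite[Theorem 5.9]{SimisVasconcelosVillarreal94}; here is how I would prove it directly. I would use the standard reformulation of normal torsion-freeness for squarefree monomial ideals: $I(G)$ is normally torsion-free iff $I(G)^k=I(G)^{(k)}$ for all $k\ge 1$, i.e.\ $\operatorname{Ass}(I(G)^k)\subseteq\operatorname{Min}(I(G))$. Since $I(G)=\bigcap_{C}\mathfrak{p}_C$, where $C$ runs over the minimal vertex covers of $G$ and $\mathfrak{p}_C=(x_i\mid x_i\in C)$, we have
\[
I(G)^{(k)}=\bigcap_C \mathfrak{p}_C^{\,k}.
\]
Consequently a monomial $x^{\mathbf a}$ lies in $I(G)^{(k)}$ iff $\sum_{x_i\in C}a_i\ge k$ for every vertex cover $C$ of $G$. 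On the other hand, $x^{\mathbf a}\in I(G)^k$ iff there is a multiset of $k$ edges, used with multiplicities $m_e\ge 0$, such that $\sum_{e\ni x_i}m_e\le a_i$ for all $i$. In other words, we need an integral $\mathbf a$-capacitated matching of size $k$. The whole problem thus becomes a min--max question between weighted matchings and covers.

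\emph{Bipartite $\Rightarrow$ normally torsion-free.} Let $A$ be the vertex--edge incidence matrix of $G$. I would use that $A$ is totally unimodular when $G$ is bipartite. Consider the LP duality
\[
\max\{\mathbf 1\cdot m : Am\le \mathbf a,\ m\ge 0\}=\min\{\mathbf a\cdot y : A^{T}y\ge \mathbf 1,\ y\ge 0\}.
\]
By total unimodularity both optima are attained at integral points. An integral optimal $y$ may be taken in $\{0,1\}^n$, since lowering entries above $1$ keeps feasibility and does not increase the objective. Such a $y$ is the indicator vector of a vertex cover $C$. Hence the dual optimum equals $\min_C\sum_{x_i\in C}a_i$, which is $\ge k$ whenever $x^{\mathbf a}\in I(G)^{(k)}$. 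The primal then has an integral solution of value $\ge k$, giving $x^{\mathbf a}\in I(G)^k$. Therefore $I(G)^{(k)}\subseteq I(G)^k$; the reverse inclusion is automatic. This is the main step. The only genuine input is total unimodularity of bipartite incidence matrices (equivalently, the weighted K\"onig--Egerv\'ary theorem), which I would quote as standard.

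\emph{Non-bipartite $\Rightarrow$ not normally torsion-free.} Suppose $G$ contains an odd cycle, and choose a shortest one, $C_{2r+1}$ on vertices $x_1,\dots,x_{2r+1}$. Put $f=x_1\cdots x_{2r+1}$. Any vertex cover of $G$ covers the edges of this cycle, and covering a $(2r+1)$-cycle requires at least $r+1$ of its vertices. Hence $f\in\mathfrak{p}_C^{\,r+1}$ for every minimal cover $C$, so $f\in I(G)^{(r+1)}$. But $\deg f=2r+1<2(r+1)$, while $I(G)^{r+1}$ is generated in degree $2r+2$, so $f\notin I(G)^{r+1}$. Thus $I(G)^{(r+1)}\neq I(G)^{r+1}$, and some associated prime of $I(G)^{r+1}$ is non-minimal. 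I expect no obstacle in this direction; minimality of the cycle is not even needed.
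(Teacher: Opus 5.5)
Your argument is correct. The paper gives no proof of this lemma: it is imported from \cite[Theorem 5.9]{SimisVasconcelosVillarreal94}. So what you have written is a self-contained replacement for a citation, not a variant of an argument in the paper.

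Your route goes through the equalities $I(G)^{(k)}=I(G)^k$. You express membership on each side as a capacitated-matching condition and a vertex-cover condition. In the bipartite case you close the gap with LP duality and total unimodularity of the incidence matrix, i.e.\ the weighted K\"onig--Egerv\'ary theorem. This is the polyhedral viewpoint, in which normal torsion-freeness of a squarefree monomial ideal is the max-flow min-cut (TDI) property of its clutter.

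The details hold up:
\begin{itemize}
\item Both linear programs are feasible and pointed.
\item Truncating an integral dual optimum at $1$ keeps it feasible and does not increase $\mathbf a\cdot y$, since $\mathbf a\ge 0$.
\item An integral primal solution of value $\ge k$ can be lowered to value exactly $k$.
\item In the converse, the odd-cycle monomial satisfies $f\in I(G)^{(r+1)}\setminus I(G)^{r+1}$ by the degree count, and, as you say, any odd cycle works.
\end{itemize}

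Your version shows that the only nontrivial input is total unimodularity, and it fully proves the direction the paper actually relies on: bipartite $\Rightarrow$ normally torsion-free, applied to $L=I(H)$ in the proof of Theorem~\ref{v-numberofdisconnectedgraphs}. The citation buys brevity but hides where the content lies.
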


\begin{definition}\label{def:reg-pd-depth}
Let $\mathbf{F}$ be a minimal graded free resolution of $R / I$ as an $R$-module:
\[
\mathbf{F}:\;0\to \bigoplus_{j}R(-j)^{\beta_{p,j}}\to \cdots \to \bigoplus_{j}R(-j)^{\beta_{1,j}}\to R\to R/I\to 0,
\]
where $I$ is a homogeneous ideal of $R$.
 The \emph{Castelnuovo--Mumford regularity} of $R/I$, denoted by $\operatorname{reg}(R/I)$, is defined by
    \[
    \operatorname{reg}(R/I)=\max\{j-i\mid \beta_{i,j}\neq 0\}.
    \]
Let $M$ be a finitely generated graded $R$-module. Regularity can also be defined via the vanishing of local cohomology modules with respect to the unique maximal homogeneous ideal
\(
\mathfrak{m} = (x_1,\ldots,x_n).
\)
For every $i\geq 0$, define
\[
a_i(M)
=
\max\{n:H_{\mathfrak{m}}^i(M)_n\neq 0\}.
\]
Then
\[
\operatorname{reg}(M)
=
\max\{a_i(M)+i:i\geq 0\}.
\]

\end{definition}

\begin{definition}

 For a monomial \( x^{A} \in R \), write \( \mu(x^{A}) = \max\{ i : x_i \mid x^{A} \} \). A monomial ideal \( I \subseteq R \) is called stable if for every \( x^{A} \in I \) and every \( i < \mu(x^{A}) \) we have \( x_i \cdot \frac{x^{A}}{x_{\mu(x^{A})}} \in I \).

\end{definition}

 For a monomial ideal \(I \subseteq K[x,y]\), we denote by \(\mathcal{G}(I)\) its unique minimal set of monomial generators. This set can be expressed as \(\mathcal{G}(I) = \{x^{a_{1}}y^{b_{1}}, x^{a_{2}}y^{b_{2}}, \dots, x^{a_{m}}y^{b_{m}}\}\), where the sequences satisfy \(a_{1} > a_{2} > \dots > a_{m} \ge 0\) and \(0 \le b_{1} < b_{2} < \dots < b_{m}\). Conversely, any pair of sequences \((\mathbf{a}, \mathbf{b})\) meeting these conditions determines a monomial ideal through the generating set above. Consequently, there is a bijection between monomial ideals of \(K[x, y]\) and pairs of sequences \((\mathbf{a}, \mathbf{b})\) as specified, with \(\mathbf{a} : a_{1} > a_{2} > \dots > a_{m} \ge 0\) and \(\mathbf{b} : 0 \le b_{1} < b_{2} < \dots < b_{m}\). In the sequel, we denote such an ideal by \(I = (x^{a_{1}}y^{b_{1}}, x^{a_{2}}y^{b_{2}}, \dots, x^{a_{m}}y^{b_{m}})\).

\begin{comment}

\noindent In \cite[Proposition 5.5]{FicarraSgroi23}, Ficarra and Sgroi proved that if \(I \subseteq K[x,y]\) is a non-principal \(\mathfrak{m}\)-primary monomial ideal, then 
\[
\mathrm{v}(I) = \min\{a_i + b_{i+1} - 2 : 1 \le i \le m-1\},
\]
where \(b_1 = 0\) and \(a_m = 0\).

\end{comment}

\begin{theorem}[{\cite[Theorem 5.6]{FicarraSgroi23}}]\label{thm:FicarraSgroi23}

\noindent Let $I \subseteq  K[x, y] $ be a monomial ideal. Then
\[
\mathrm{v}(I) = \left\{
\begin{array}{ll}
\min \{a_i + b_{i+1} - 2 : 1 \leq i \leq m-1\}, \quad \text{if }   b_1 = 0 \text{ and } a_m = 0, \\[4pt]
\min \{a_1 + b_1 - 1,\; a_i + b_{i+1} - 2 : 1 \leq i \leq m-1\}, \quad \text {if } b_1 \neq 0 \text{ and } a_m = 0, \\[4pt]
\min \{a_m + b_m - 1,\; a_i + b_{i+1} - 2 : 1 \leq i \leq m-1\}, \quad \text {if } b_1 = 0 \text{ and } a_m \neq 0, \\[4pt]
\min \{a_1 + b_1 - 1,\; a_m + b_m - 1,\; a_i + b_{i+1} - 2 : 1 \leq i \leq m-1\},  \quad \text{otherwise}.
\end{array}
\right.
\]

\end{theorem}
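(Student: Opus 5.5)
The plan is to reduce to monomial test elements and then compute colon ideals directly from the staircase of $I$. It is known (Jaramillo--Villarreal) that for a monomial ideal the $\mathrm{v}$-number is attained by a monomial $f$. So I would only consider $f=x^py^q\notin I$. By the colon formula recalled in the preliminaries,
\[
(I:f)=\bigl(x^{\max(a_i-p,0)}\,y^{\max(b_i-q,0)} : 1\le i\le m\bigr).
\]
I would then determine $\operatorname{Ass}(I)$ for a monomial ideal of $K[x,y]$.

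\textbf{Associated primes.} Write $I=x^{a_m}y^{b_1}J$ with $J$ either $\mathfrak m$-primary or equal to $R$. The only candidates are $(x)$, $(y)$ and $(x,y)$, and:
\begin{itemize}
\item $(x,y)\in\operatorname{Ass}(I)$ iff $m\ge 2$, i.e.\ $J\neq R$;
\item $(y)\in\operatorname{Ass}(I)$ iff $b_1\ge 1$;
\item $(x)\in\operatorname{Ass}(I)$ iff $a_m\ge 1$.
\end{itemize}
These can be seen from a primary decomposition. They are also recovered by the colon computations below, which simultaneously show that the minimum runs over exactly the terms listed in each of the four cases.

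\textbf{The prime $(x,y)$.} We need $(I:f)=(x,y)$ with $f\notin I$. This requires $xf\in I$ and $yf\in I$, while no generator divides $f$. In the staircase picture, $f$ is a point outside $E(I)$ whose right and upper neighbours lie in $E(I)$.

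Let $i$ be the largest index with $b_i\le q$. Then $yf\in I$ forces $q=b_{i+1}-1$ (using $a_{i+1}\le p$), and $xf\in I$ forces $p=a_i-1$. Conversely, for $1\le i\le m-1$ the monomial $f_i=x^{a_i-1}y^{b_{i+1}-1}$ works:
\begin{itemize}
\item the generator $x^{a_i}y^{b_i}$ contributes $x$;
\item the generator $x^{a_{i+1}}y^{b_{i+1}}$ contributes $y$;
\item generators with index $j<i$ have $a_j>a_i$ and lie in $(x)$;
\item generators with index $j>i+1$ have $b_j>b_{i+1}$ and lie in $(y)$.
\end{itemize}
This gives the terms $a_i+b_{i+1}-2$.

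\textbf{The prime $(y)$, when $b_1\ge1$.} We need $(I:f)=(y)$. Then $yf\in I$ and $x^kf\notin I$ for all $k$; in particular $x^{a_1}f\notin I$, which forces $q\le b_1-1$. Then $yf\in I$ forces $q=b_1-1$ and $p\ge a_1$. The monomial $f=x^{a_1}y^{b_1-1}$ works: the first generator contributes $y$, and each other generator contributes $y^{b_i-b_1+1}$. This gives $a_1+b_1-1$. The prime $(x)$ is symmetric and gives $a_m+b_m-1$.

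\textbf{Assembly.} Taking the minimum over the associated primes present in each case gives the four formulas. The main obstacle is the careful minimality argument: showing that every monomial $f$ realizing a given prime has degree at least the claimed value. The potentially nonobvious configurations are those where $p$ or $q$ exceeds some exponents, and these must be excluded. I would handle this by the "largest index $i$ with $b_i\le q$" bookkeeping above, together with the fact that $f\notin I$.
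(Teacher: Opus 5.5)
The paper does not prove this statement. It is quoted from \cite[Theorem 5.6]{FicarraSgroi23} and used as a black box in Section 3, so your argument is an independent, self-contained proof, and it is correct.

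The reduction to monomial $f$ is valid for every monomial ideal. Suppose $(I:f)=\mathfrak p$ and you discard the terms of $f$ lying in $I$. Each remaining term $u$ then satisfies $\mathfrak p\subseteq (I:u)\subsetneq R$. If every $(I:u)$ contained a monomial $w_u\notin\mathfrak p$, then $\prod_u w_u$ would lie in $(I:f)\setminus\mathfrak p$, which is impossible.

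Since $\mathfrak p\in\operatorname{Ass}(I)$ exactly when some $f$ has $(I:f)=\mathfrak p$, your colon computations alone decide which of $(x)$, $(y)$, $(x,y)$ occur. The primary-decomposition paragraph is therefore optional.

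Compared with citing the result, your route gives more: it classifies the extremal monomials exactly.
\begin{itemize}
\item $(I:f)=(x,y)$ if and only if $f=x^{a_i-1}y^{b_{i+1}-1}$ for some $1\le i\le m-1$.
\item When $b_1\ge 1$, $(I:f)=(y)$ if and only if $f=x^py^{b_1-1}$ with $p\ge a_1$, and symmetrically for $(x)$.
\end{itemize}
This is the consecutive-generator form in which the paper later applies the formula to $\overline I$.

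When writing it out, tighten the $(x,y)$ step.
\begin{itemize}
\item The index $i$ exists because $xf\in I$ needs a generator with $b_j\le q$. Also $i\le m-1$, since otherwise $yf\notin I$.
\item The parenthetical ``using $a_{i+1}\le p$'' has the logic backwards. From $f\notin I$ and $b_i\le q$ you get $a_i>p$, hence $a_j>p$ for all $j\le i$. So a generator dividing $yf=x^py^{q+1}$ has index $j>i$ and satisfies $q<b_{i+1}\le b_j\le q+1$. This forces $j=i+1$ and $q=b_{i+1}-1$; the inequality $a_{i+1}\le p$ is then a consequence, not an input.
\item The mirror argument for $xf$ forces the dividing generator to have index $j\le i$ with $a_j=p+1$. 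Since $p<a_i\le a_j$, this gives $j=i$ and $a_i=p+1$.
\end{itemize}
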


\section{v-number of a monomial ideal and its integral closure}

\noindent In this section, we compare the $\mathrm{v}$-number of a monomial ideal with that of its integral closure. We first prove that
\(
\mathrm{v}(\overline{I})\leq \mathrm{v}(I)
\)
for every monomial ideal $I\subseteq K[x,y]$. We then provide explicit examples showing that this inequality does not hold in general, even for $\mathfrak{m}$-primary monomial ideals, stable monomial ideals and equigenerated monomial ideals. We also establish the inequality
\(
\mathrm{v}(\overline{I})\leq \mathrm{v}(I),
\)
and in some cases equality, for several important classes of monomial ideals.

\begin{proposition}\label{{v(I^bar)<=v(I)}}
Let \(I\subseteq K[x,y]\) be a non-principal \(\mathfrak m\)-primary
monomial ideal. Then
\[
\mathrm{v}(\overline{I})\leq \mathrm{v}(I),
\]
where \(\overline{I}\) denotes the integral closure of \(I\).
\end{proposition}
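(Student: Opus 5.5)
The plan is to reduce the statement to a comparison of socle monomials and then prove that comparison with a divisibility-maximality argument.

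\textbf{Setup.} Since $I$ is non-principal and $\mathfrak m$-primary, we have $b_1=0$ and $a_m=0$. By Theorem~\ref{thm:FicarraSgroi23},
\[
\mathrm{v}(I)=a_i+b_{i+1}-2
\]
for some $1\le i\le m-1$. Put $u=x^{a_i-1}y^{b_{i+1}-1}$. Note that $\overline{I}$ is again a proper $\mathfrak m$-primary monomial ideal (it contains $x^{a_1}$ and $y^{b_m}$ and is contained in $\mathfrak m$), so it is non-principal.

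\textbf{Step 1: v-number as least socle degree.} For an $\mathfrak m$-primary monomial ideal $J\subseteq K[x,y]$, $\operatorname{Ass}(J)=\{\mathfrak m\}$. Hence $\mathrm{v}(J)$ is the least degree of a monomial $w\notin J$ with $xw,yw\in J$. For a monomial ideal one can reduce to monomial $f$ here, and this description is also what Theorem~\ref{thm:FicarraSgroi23} encodes through the corners $x^{a_j-1}y^{b_{j+1}-1}$. So it suffices to exhibit a monomial $w\notin\overline{I}$ with $xw,yw\in\overline{I}$ and $\deg w\le\deg u$.

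\textbf{Step 2: the easy case.} Both $xu=x^{a_i}y^{b_{i+1}-1}$ and $yu=x^{a_i-1}y^{b_{i+1}}$ lie in $I\subseteq\overline{I}$. Indeed, the first is divisible by $x^{a_i}y^{b_i}$, since $b_{i+1}-1\ge b_i$, and the second by $x^{a_{i+1}}y^{b_{i+1}}$. Therefore, if $u\notin\overline{I}$, then $u$ is itself a socle monomial of $R/\overline{I}$ and $\mathrm{v}(\overline{I})\le\deg u=\mathrm{v}(I)$.

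\textbf{Step 3: the case $u\in\overline{I}$.} We need a socle monomial of smaller degree. Let $w=x^py^q$ be a monomial dividing $u$ with $w\notin\overline{I}$ that is maximal for divisibility among such monomials. One exists because $1\notin\overline{I}$. Then $\deg w<\deg u$, and we must check that $xw,yw\in\overline{I}$.
\begin{itemize}
\item If $p<a_i-1$, then $xw$ still divides $u$, so $xw\in\overline{I}$ by maximality. Similarly $yw\in\overline{I}$ when $q<b_{i+1}-1$.
\item In the boundary cases $p=a_i-1$ or $q=b_{i+1}-1$, divisibility alone does not suffice. If $p=a_i-1$ and $q\ge b_i$, then $xw=x^{a_i}y^q\in I$ directly. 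The symmetric case, $q=b_{i+1}-1$ and $p\ge a_{i+1}$, is handled the same way.
\item The remaining boundary situation is, for instance, $p=a_i-1$ and $q<b_i$. Here I would use convexity of $\operatorname{NP}(I)$. The segment from $(a_i,b_i)$ to $(a_{i+1},b_{i+1})$ and the point $(a_i-1,b_{i+1}-1)\in\operatorname{NP}(I)$ together force the lower boundary of $\operatorname{NP}(I)$ over the column $p=a_i-1$ to lie at height below $b_{i+1}-1$. If this fails, I would replace $w$ by a non-member of degree at most $\deg w$ taken further along the staircase of $\overline{I}$, using the convex piecewise-linear boundary of $\operatorname{NP}(I)$.
\end{itemize}

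\textbf{Main obstacle.} The hard part is exactly this boundary situation: controlling the degree of the socle monomial of $R/\overline{I}$ when $w$ sits on the edge of the box below $u$. An alternative route, if the box argument becomes messy, is to describe the minimal generators $x^{c_j}y^{d_j}$ of $\overline{I}$ as the lattice points just above the lower convex hull of the exponent points $(a_j,b_j)$. The goal would then be to show that some consecutive pair satisfies $c_j+d_{j+1}\le a_i+b_{i+1}$, using that the hull lies weakly below the staircase of $I$. Combined with Theorem~\ref{thm:FicarraSgroi23} applied to $\overline{I}$, this would give the inequality.
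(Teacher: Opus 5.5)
Your Steps 1 and 2 are correct. Step 3 has a genuine gap at exactly the point you flag: a divisibility-maximal $w\mid u$ with $w\notin\overline I$ need not be a socle monomial of $R/\overline I$. Neither your convexity remark nor the ``move along the staircase'' fallback fixes this.

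Take $I=(x^{30},x^{21}y,y^3)$. Then $\mathrm{v}(I)=\min\{29,22\}=22$, attained at $i=2$, so $u=x^{20}y^2$. Since $(21,1)$ satisfies $p+10q\ge 30$, you get $\operatorname{NP}(I)=\{(p,q)\in\mathbb R^2_{\ge0}: p+10q\ge 30\}$ and $\overline I=(x^{30},x^{20}y,x^{10}y^2,y^3)$; in particular $u\in\overline I$. The monomial $w=x^{20}$ divides $u$, is not in $\overline I$, and is maximal, because $x^{20}y\in\overline I$. It falls under your third bullet ($p=a_2-1$, $q=0<b_2=1$), yet $xw=x^{21}\notin\overline I$. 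The boundary over column $20$ does lie below height $b_3-1=2$, but that is not what you need: you need $(a_i,q)\in\operatorname{NP}(I)$, and that fails here. Continuing along the row from $w$ reaches $x^{29}$, of degree $29>22$, and you give no rule for choosing a better non-member (here $\mathrm{v}(\overline I)=11$, realized by $x^9y^2$). Your alternative route only restates the target inequality $c_j+d_{j+1}\le a_i+b_{i+1}$; it gives no argument for it.

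The missing ingredient is a precise convexity lemma. Let $x^cy^d\in\mathcal G(\overline I)$ be a generator that is not a vertex of $\operatorname{NP}(I)$, so $c,d\ge 1$. The boundary point over column $c$ has height in $(d-1,d]$ and is not a vertex, because vertices are lattice points. Hence the lower boundary is affine with some slope $-s<0$ on $[c-1,c+1]$. Consequently $x^cy^{d-1}$ (if $s\ge 1$) or $x^{c-1}y^{d}$ (if $s<1$) is a socle monomial of $R/\overline I$ of degree $c+d-1$.

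With this lemma, the blockwise route goes through; it is the route the paper takes and your alternative. Write $\mathcal G(\overline I)=\{x^{c_t}y^{d_t}\}$ in the usual order, and set $j=\min\{t:c_t\le a_i\}$ and $l=\max\{t:d_t\le b_{i+1}\}$. Since $u_i,u_{i+1}\in\overline I$, you get $d_j\le b_i$, $c_l\le a_{i+1}$, and hence $j\le l$.
\begin{itemize}
\item If $j<l$, the pair $(j,j+1)$ gives $c_j+d_{j+1}-2\le a_i+b_{i+1}-2$. This is essentially the paper's estimate.
\item If $j=l$, then $x^{c_j}y^{d_j}$ divides $x^{a_{i+1}}y^{b_i}$, so it properly divides $u_i$. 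It is therefore not in $I$, hence not a vertex, since $V(I)\subseteq E(\mathcal G(I))$. The lemma then gives a socle monomial of degree $\le a_{i+1}+b_i-1<\mathrm{v}(I)$.
\end{itemize}
The second case, where one generator of $\overline I$ divides both $u_i$ and $u_{i+1}$, cannot be handled by divisibility alone. The paper's Case 2 does not address it either, since nothing there prevents $u_i'=u_{i+1}'$.

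Within your own framework, the same lemma repairs the third bullet. Apply it to the leftmost generator of $\overline I$ in row $q+1$: it has $x$-degree $\le a_i-1$ and $y$-degree $q+1\le b_i$, so it properly divides $u_i$, and it yields a socle monomial of degree at most $\deg w$. The symmetric boundary case is handled the same way, using the lowest generator in column $p+1$.
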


\begin{proof}
Let
\(I=(x^{a_1},x^{a_2}y^{b_2},\ldots,
x^{a_{m-1}}y^{b_{m-1}},y^{b_m})\), where
\(a_1>a_2>\cdots>a_{m-1}>a_{m}=0\) and
\(b_1=0<b_2<\cdots<b_m\) . 
By Theorem \ref{thm:FicarraSgroi23},
\begin{equation}\label{eq:vI}
\mathrm{v}(I)
=\min\{a_i+b_{i+1}-2:1\leq i\leq m-1\}.
\end{equation}

\noindent\textbf{Case 1:}
Suppose that every minimal generator of \(I\) is also a minimal
generator of \(\overline{I}\). For \(1\leq i\leq m-1\), let
\(w_{i,1},\ldots,w_{i,r_i}\) be the minimal generators of
\(\overline{I}\) lying strictly between
\(u_i=x^{a_i}y^{b_i}\) and
\(u_{i+1}=x^{a_{i+1}}y^{b_{i+1}}\). Put
\(w_{i,0}=u_i\), \(w_{i,r_i+1}=u_{i+1}\), and write
\(w_{i,j}=x^{c_{i,j}}y^{d_{i,j}}\). Then
\(c_{i,0}=a_i>c_{i,1}>\cdots>c_{i,r_i+1}=a_{i+1}\) and
\(d_{i,0}=b_i<d_{i,1}<\cdots<d_{i,r_i+1}=b_{i+1}\).

\noindent For a fixed \(i\), the consecutive pairs of minimal generators of
\(\overline{I}\) in this block are
\(w_{i,j},w_{i,j+1}\), where \(0\leq j\leq r_i\). Hence, by Theorem
\ref{thm:FicarraSgroi23}, the corresponding terms in the
computation of \(\mathrm{v}(\overline{I})\) are
\(c_{i,j}+d_{i,j+1}-2\). Since \(c_{i,j}\leq a_i\) and
\(d_{i,j+1}\leq b_{i+1}\), we have
\[
c_{i,j}+d_{i,j+1}-2\leq a_i+b_{i+1}-2
\quad\text{for all }0\leq j\leq r_i.
\]
Let
\(\mu_i=\min\{c_{i,j}+d_{i,j+1}-2:0\leq j\leq r_i\}\).
Then \(\mu_i\leq a_i+b_{i+1}-2\). Since every consecutive pair of
minimal generators of \(\overline{I}\) belongs to one of these
blocks, we have
\(\mathrm{v}(\overline{I})=\min\{\mu_i:1\leq i\leq m-1\}\).
Therefore,
\[
\mathrm{v}(\overline{I})
\leq
\min\{a_i+b_{i+1}-2:1\leq i\leq m-1\}
=\mathrm{v}(I).
\]

\noindent\textbf{Case 2:}
Suppose that
\(\mathcal{G}(I)\cap \mathcal{G}(\overline{I})=\{x^{a_1},y^{b_m}\}\) 
and for \(i = 2, \ldots, m-1\), we have
\(u_i'=x^{a_i'}y^{b_i'}\in \mathcal{G}(\overline{I})\) such that
\(u_i'\mid u_i\). Set
\(u_1'=u_1\) and \(u_m'=u_m\). Since \(u_i'\mid u_i\), we have
\(a_i'\leq a_i\) and \(b_i'\leq b_i\) for every \(i\). For a fixed \(i\), let
\(w_{i,1},\ldots,w_{i,r_i}\) be the minimal generators of
\(\overline{I}\) lying strictly between \(u_i'\) and \(u_{i+1}'\).
Put \(w_{i,0}=u_i'\), \(w_{i,r_i+1}=u_{i+1}'\), and write
\(w_{i,j}=x^{c_{i,j}}y^{d_{i,j}}\). Then
\(c_{i,j}\leq a_i'\) and \(d_{i,j+1}\leq b_{i+1}'\) for
\(0\leq j\leq r_i\). Thus, every term arising from two consecutive
minimal generators in this block satisfies
\[
c_{i,j}+d_{i,j+1}-2
\leq a_i'+b_{i+1}'-2
\leq a_i+b_{i+1}-2.
\]
Let
\(\mu_i=\min\{c_{i,j}+d_{i,j+1}-2:0\leq j\leq r_i\}\).
Then
\(\mu_i\leq a_i'+b_{i+1}'-2\leq a_i+b_{i+1}-2\).
Since every consecutive pair of minimal generators of
\(\overline{I}\) belongs to one of these blocks, we obtain
\[
\begin{aligned}
\mathrm{v}(\overline{I})
&=\min\{\mu_i:1\leq i\leq m-1\}\\
&\leq
\min\{a_i'+b_{i+1}'-2:1\leq i\leq m-1\}\\
&\leq
\min\{a_i+b_{i+1}-2:1\leq i\leq m-1\}
=\mathrm{v}(I).
\end{aligned}
\]

\noindent For the remaining cases, some minimal generators of \(I\) remain
minimal generators of \(\overline{I}\), while the others are replaced
by minimal generators of \(\overline{I}\) dividing them. Applying the
blockwise argument of Case~1 together with the inequalities
\(a_i'\leq a_i\) and \(b_i'\leq b_i\) from Case~2 gives
\(\mathrm{v}(\overline{I})\leq\mathrm{v}(I)\).
\end{proof}

\begin{theorem}\label{2variablesproof}
Let \(I \subseteq K[x,y]\) be a monomial ideal and let \(\overline{I}\) denote its integral closure. Then
\[
\mathrm{v}(\overline{I}) \le \mathrm{v}(I).
\]
\end{theorem}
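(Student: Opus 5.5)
We reduce to the $\mathfrak m$-primary case handled in Proposition~\ref{{v(I^bar)<=v(I)}} by factoring out the greatest common divisor, and then treat the boundary terms of Theorem~\ref{thm:FicarraSgroi23} separately.

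\emph{Principal and reduction steps.} If $I=(x^{a}y^{b})$ is principal, then $\overline{I}=I$, because principal monomial ideals are integrally closed. Otherwise, write the generators as in the preliminaries, with $a_1>\cdots>a_m$ and $b_1<\cdots<b_m$. Put $p=a_m$, $q=b_1$, so that $I=x^py^q J$, where
\[
J=(x^{a_1-p},\dots,y^{b_m-q})
\]
is a non-principal $\mathfrak m$-primary ideal. The integral closure commutes with multiplication by a monomial: $\operatorname{NP}(x^py^qJ)=(p,q)+\operatorname{NP}(J)$, hence $\overline{I}=x^py^q\,\overline{J}$. The generators of $\overline{I}$ are therefore those of $\overline{J}$ shifted by $(p,q)$. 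Their first $x$-exponent is still $a_1$, their last $y$-exponent is still $b_m$, and their smallest $x$-exponent and $y$-exponent are still $p$ and $q$.

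\emph{Comparing the formulas.} Apply Theorem~\ref{thm:FicarraSgroi23} to both $I$ and $\overline{I}$. The case in the theorem is determined by whether $b_1=q$ and $a_m=p$ vanish, and these are shared by $I$ and $\overline{I}$, so both ideals fall in the same case.

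\textbf{Boundary terms.} These are $a_1+b_1-1$ and $a_m+b_m-1$. They coincide for $I$ and $\overline{I}$, because the extreme generators $x^{a_1}y^{q}$ and $x^{p}y^{b_m}$ are vertices of the Newton polyhedron and hence remain minimal generators of $\overline{I}$, by \cite[Lemma~2.4]{Hoa22}.

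\textbf{Interior terms.} These are $a_i+b_{i+1}-2$. For $I$ they equal $(a_i-p)+(b_{i+1}-q)-2+p+q$, that is, the corresponding interior terms for $J$ plus $p+q$. The same shift by $p+q$ holds for $\overline{I}$ relative to $\overline{J}$. The proof of Proposition~\ref{{v(I^bar)<=v(I)}} actually shows more than $\mathrm v(\overline J)\le \mathrm v(J)$: it shows that for each consecutive pair $u_i,u_{i+1}$ of generators of $J$, some consecutive pair $w,w'$ of generators of $\overline J$ satisfies
\[
c+d'-2\le a_i+b_{i+1}-2,
\]
where $w=x^cy^d$ and $w'=x^{c'}y^{d'}$. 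This is the blockwise argument. Shifting by $p+q$ gives the same termwise domination for $I$ versus $\overline I$. Hence every term in the minimum for $\mathrm v(I)$ is bounded below by some term in the minimum for $\mathrm v(\overline I)$, and so $\mathrm v(\overline I)\le \mathrm v(I)$.

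\emph{Main obstacle.} The delicate point is the termwise domination of the interior terms when generators of $I$ are not generators of $\overline I$. For this I would argue directly rather than rely on the sketchy case analysis. Each $u_i$ lies in $\overline I$, so $u_i$ is divisible by some generator $u_i'$ of $\overline I$. Since the generators of $\overline I$ form a staircase, the generators strictly between $u_i'$ and $u_{i+1}'$ have $x$-exponent at most $a_i$ and $y$-exponent at most $b_{i+1}$.

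A subtle case is $u_i'=u_{i+1}'$. Then $u_i'$ divides $\gcd(u_i,u_{i+1})=x^{a_{i+1}}y^{b_i}$, which is impossible because that monomial lies outside $I$ and is not integral. I would confirm this with the valuative criterion: it is strictly below the segment joining $(a_i,b_i)$ and $(a_{i+1},b_{i+1})$. Therefore some consecutive pair exists in the range, and its term is at most $a_i+b_{i+1}-2$.
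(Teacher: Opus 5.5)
\paragraph{Verdict.} Your strategy is the paper's: use Theorem~\ref{thm:FicarraSgroi23}, note that $x^{a_1}y^{b_1}$ and $x^{a_m}y^{b_m}$ stay minimal generators of $\overline I$ (so the case and the boundary terms agree), and dominate each interior term $a_i+b_{i+1}-2$ by a term of $\overline I$. Your reduction $\overline{x^py^qJ}=x^py^q\overline J$ and the boundary comparison are correct. There is, however, a genuine gap at exactly the step you flagged.

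\paragraph{The gap.} You claim that $\gcd(u_i,u_{i+1})=x^{a_{i+1}}y^{b_i}$ is never integral over $I$, and this is false. Lying strictly below the segment joining $(a_i,b_i)$ and $(a_{i+1},b_{i+1})$ says nothing about membership in $\operatorname{NP}(I)$. The lower boundary of $\operatorname{NP}(I)$ is cut out by \emph{all} generators and can pass well below that segment; the linear functional you test with need not attain its minimum on $\operatorname{NP}(I)$ at $u_i,u_{i+1}$.

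For $I=(x^4,x^3y^2,x^2y^3,y^4)$ one has $\operatorname{NP}(I)=\{\alpha+\beta\ge 4\}$, so $\overline I=(x,y)^4$. Here $x^2y^2=\gcd(x^3y^2,x^2y^3)$ is a minimal generator of $\overline I$ dividing both $u_2$ and $u_3$, so your free choice of divisors permits $u_2'=u_3'=x^2y^2$.

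The case you do not mention, where $u_{i+1}'$ strictly precedes $u_i'$ (which again forces both to divide the gcd), also occurs. For example, take $I=(x^{10},x^9y^6,x^6y^9,y^{10})$, so $\overline I=(x,y)^{10}$, with $u_2'=x^4y^6$ and $u_3'=x^6y^4$. Falling back on the paper's $\mathfrak m$-primary proposition does not help, because its blockwise argument tacitly assumes $u_i'$ strictly precedes $u_{i+1}'$.

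\paragraph{What is needed.} You need the box $B_i=\{x^\alpha y^\beta:\alpha\le a_i,\ \beta\le b_{i+1}\}$ to contain at least two minimal generators of $\overline I$. These form a contiguous block of the staircase, so its first two members are consecutive and give a term at most $a_i+b_{i+1}-2$. This is true, but it needs convexity of all of $\operatorname{NP}(I)$.

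\paragraph{Sketch of the fix.} Assume $I$ is $\mathfrak m$-primary and that $w=x^cy^d$ is the only generator of $\overline I$ in $B_i$.
\begin{enumerate}
\item Then $w$ divides $u_i$ and $u_{i+1}$, so $c\le a_{i+1}$ and $d\le b_i$. Also $c,d\ge 1$, since the pure powers in $\overline I$ are those of $I$.
\item The monomials $x^{c-1}y^{b_{i+1}}$ and $x^{a_i}y^{d-1}$ lie in $B_i$ but are not divisible by $w$, so they are not in $\overline I$.
\item Let $\varphi$ be the nonincreasing convex piecewise-linear function whose graph is the lower boundary of $\operatorname{NP}(I)$; its breakpoints occur only at generators of $I$. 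Steps 1 and 2 give $\varphi(c)\le d$, $\varphi(c-1)>b_{i+1}$ and $\varphi(a_i)>d-1$.
\item Any generator of $I$ with $x$-exponent $t\in[c,a_i)$ has $y$-exponent at least $b_{i+1}>d\ge\varphi(t)$, so it is not on the graph. Hence $\varphi$ is affine on $[c-1,a_i]$.
\item But the slope of $\varphi$ on $[c-1,c]$ is $<d-b_{i+1}\le -1$, while on $[c,a_i]$ it is $>-1/(a_i-c)\ge -1$. This contradicts affineness.
\end{enumerate}
With this lemma replacing the gcd claim, your termwise domination, and hence your proof, goes through.
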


\begin{proof}
The proof follows from Theorem \ref{thm:FicarraSgroi23}, applying the arguments of Proposition \ref{{v(I^bar)<=v(I)}}, and the observation that, if
\[
\mathcal{G}(I)=\{x^{a_1}y^{b_1},\ldots,x^{a_m}y^{b_m}\},
\]
with \(a_1>\cdots>a_m\geq 0\) and \(0\leq b_1<\cdots<b_m\), then \(x^{a_1}y^{b_1}\) and \(x^{a_m}y^{b_m} \in \mathcal{G}(\overline{I})\).
\end{proof}

\begin{comment}
    
In \cite{KuronyaPintye}, Küronya -Pintye conjectured that integral closure does not increase Castelnuovo-Mumford regularity, i.e., \(\operatorname{reg}(\overline{I}) \le \operatorname{reg}(I)\), where \(I\) is a homogeneous ideal of \(R\). The inequality is known for several classes of monomial ideals in \cite{Javadekar26},\cite{KumarKumar21}. Later in \cite{Misra26}, Misra produces a counterexample to the conjecture.
\end{comment}

A natural question that arises is whether \(\mathrm{v}(\overline{I}) \le \mathrm{v}(I)\) holds for a monomial ideal \(I\) of \(R\). In the next example, we provide a counterexample to this statement for \(\mathfrak{m}\)-primary homogeneous ideals.

\begin{example}\label{counterexampletomprimary}
Let $I=(x^2,y^2,z^5,xyz)\subseteq K[x,y,z]$. Then 
\(
\overline I=(x^2,xy,y^2,xz^3,yz^3,z^5)
\)
and $\mathrm{v}(I)=2\le \mathrm{v}(\overline I)=3$.

\begin{proof}
Let $(a,b,c)\in \mathrm{NP}(I)$. Then the following inequality holds:
\[
5a+5b+2c\ge 10.
\]
Since this inequality is satisfied by the exponent vectors of the minimal monomial generators of $I$, it remains valid on their convex hull and is therefore preserved under the addition of $\mathbb R_{\ge 0}^3$. Consequently, it holds for all points in $\mathrm{NP}(I)$.

Let $J=(x^2,xy,y^2,xz^3,yz^3,z^5)$. The vectors $(1,1,0)$, $(1,0,3)$, $(0,1,3)$ satisfy $5a+5b+2c\ge 10$, so they lie in $\mathrm{NP}(I)$; hence $J\subseteq \overline I$.
Conversely, take $x^a y^b z^c\in \overline I$. Then $(a,b,c)\in \mathrm{NP}(I)$, so
\[
a,b,c\ge 0,\qquad 5a+5b+2c\ge 10. \tag{1}
\]
If $a\ge 2$, $b\ge 2$, or $a,b\ge 1$, then $x^2$, $y^2$, or $xy$ divides the monomial. Otherwise $a\le 1$, $b\le 1$, and not both positive. If $a=b=0$, (1) gives $c\ge 5$, so $z^5\mid x^a y^b z^c$. If $a=0,b=1$, then $c\ge 3$, so $yz^3\mid x^a y^b z^c$. If $a=1,b=0$, then $c\ge 3$, so $xz^3\mid x^a y^b z^c$. Thus every monomial in $\overline I$ lies in $J$. Hence $\overline I\subseteq J$, proving equality.
Since $I:xy=(x,y,z)$ and $\overline I:yz^2=(x,y,z)$, we have $\mathrm{v}(I)=2\le \mathrm{v}(\overline I)=3$.  One can use Macaulay2 \cite{GraysonStillman} to verify the above example.

\end{proof}
\end{example}

\noindent We now investigate classes of monomial ideals \(I\) for which
\(
\mathrm{v}(\overline{I})\leq \mathrm{v}(I).
\)

\begin{theorem}\label{equigeneratedmprimary}
Let $I$ be an equigenerated $\mathfrak  {m}$-primary monomial ideal in \(R\) with $\alpha(I) = d$. Then $\mathrm{v}(\overline{I}) = d - 1 \leq \mathrm{v}({I})$. Moreover, for every \(k \ge 1\),
\[
\mathrm{v}(\overline{I^k}) = kd - 1 \le \mathrm{v}(I^k).
\]
\end{theorem}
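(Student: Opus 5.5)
Fix an equigenerated $\mathfrak m$-primary monomial ideal $I\subseteq R=K[x_1,\ldots,x_n]$ generated in degree $d$. The plan is to show that $\overline I=\mathfrak m^d$, and then compute both sides directly.

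\emph{Step 1: $\overline I=\mathfrak m^d$.} Since $I$ is $\mathfrak m$-primary, it contains pure powers of every variable. All generators have degree $d$, so these powers are $x_i^d$ for $i=1,\ldots,n$. Hence $(x_1^d,\ldots,x_n^d)\subseteq I\subseteq \mathfrak m^d$. Every point $\mathbf a\in\mathbb N^n$ with $|\mathbf a|=d$ is the convex combination $\sum (a_i/d)\, d\mathbf e_i$ of the exponent vectors of the $x_i^d$. Therefore $\mathbf a\in \operatorname{NP}(I)$, which gives $\mathfrak m^d\subseteq\overline I$. Conversely, $\mathfrak m^d$ is integrally closed, so $\overline I\subseteq\overline{\mathfrak m^d}=\mathfrak m^d$. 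Equivalently, every point of $\operatorname{NP}(I)$ satisfies $\sum a_i\ge d$. The same argument applies to $I^k$: it is equigenerated in degree $kd$, it is $\mathfrak m$-primary, and it contains $x_i^{kd}$. So $\overline{I^k}=\mathfrak m^{kd}$, and the second claim reduces to the first with $d$ replaced by $kd$.

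\emph{Step 2: $\mathrm v(\mathfrak m^d)=d-1$.} The only associated prime of $\mathfrak m^d$ is $\mathfrak m$. If $f=x_1^{d-1}$, then $(\mathfrak m^d:f)=\mathfrak m$, so $\mathrm v(\mathfrak m^d)\le d-1$. For the reverse inequality, suppose $f$ is homogeneous with $(\mathfrak m^d:f)=\mathfrak m$. Then $f\notin\mathfrak m^d$, so $\deg f<d$. If $\deg f\le d-2$, then $x_1 f\notin\mathfrak m^d$ for degree reasons, a contradiction. Thus $\deg f=d-1$.

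\emph{Step 3: $\mathrm v(I)\ge d-1$.} This is the only point needing care, and it is still easy. Since $I$ is $\mathfrak m$-primary, $\operatorname{Ass}(I)=\{\mathfrak m\}$. Suppose $f$ is homogeneous with $(I:f)=\mathfrak m$. Then $x_1f\in I$ and $f\ne0$, so $\deg f+1\ge\alpha(I)=d$, that is, $\deg f\ge d-1$. (No reduction to monomial $f$ is needed, because the colon only has to contain $x_1$.) Hence $\mathrm v(I)\ge d-1=\mathrm v(\overline I)$. Applying Steps 2 and 3 to $I^k$, with $\alpha(I^k)=kd$, gives $\mathrm v(\overline{I^k})=kd-1\le \mathrm v(I^k)$ for every $k\ge1$.

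There is no real obstacle here. The only subtle point is Step 1, which uses that the pure powers in $I$ must have degree exactly $d$; this is where equigeneration is essential.
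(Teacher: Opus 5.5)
Your proof is correct and shares the paper's core. Equigeneration forces $x_i^d\in I$, convexity of $\operatorname{NP}(I)$ then puts $x_1^{d-1}x_i$ in $\overline I$, and $x_1^{d-1}$ is the witness; the same argument is then applied to $I^k$.

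The paper stops at the memberships $x_1^{d-1}x_i\in\overline I$. It obtains the lower bounds $\mathrm v(\overline I)\ge d-1$ and $\mathrm v(I)\ge d-1$ by citing the general inequality $\mathrm v(J)\ge\alpha(J)-1$. You go further: you sandwich $I$ between $(x_1^d,\ldots,x_n^d)$ and the integrally closed ideal $\mathfrak m^d$, which gives $\overline I=\mathfrak m^d$ (and $\overline{I^k}=\mathfrak m^{kd}$) outright. You then prove both lower bounds by an elementary degree count.

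This makes your argument self-contained. It also makes explicit that $\alpha(\overline I)=d$, i.e.\ $x_1^{d-1}\notin\overline I$. The paper uses this fact without comment when it asserts $(\overline I:x_1^{d-1})=\mathfrak m$ and when it applies the cited lower bound to $\overline I$.
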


\begin{proof}
Since $I$ is an equigenerated $\mathfrak{m}$-primary ideal, we have $x_i^d \in I$ for all $i = 1, 2, \ldots, n$. Consequently, $d e_i \in \operatorname{E}(I) \subseteq \operatorname{NP}(I)$ for each $i$. 
For $i = 2, \ldots, n$, consider the convex combination
\[
\frac{d-1}{d}(d e_1) + \frac{1}{d}(d e_i) = (d-1)e_1 + e_i.
\]
Since $\operatorname{NP}(I)$ is convex, it follows that $(d-1)e_1 + e_i \in \operatorname{NP}(I)$ for all $i = 2, \ldots, n$. Moreover, $(d-1)e_1 + e_1 = d e_1 \in \operatorname{NP}(I)$ as well. Hence $x_1^{d-1} x_i \in \overline{I}$ for all $i = 1, 2, \ldots, n$.
Therefore, $(\overline{I} : x_1^{d-1}) = \mathfrak{m}$. This yields $\mathrm{v}(\overline{I}) \le d - 1$. On the other hand, by \cite[Lemma 3.1]{BiswasMandalSaha26} we have $\mathrm{v}(\overline{I}) \ge \alpha(\overline{I}) - c(\overline{I}) = d - 1$. Thus $\mathrm{v}(\overline{I}) = d - 1 \leq \mathrm{v}({I})$. Since \(I^k\) is an \(\mathfrak{m}\)-primary equigenerated monomial ideal for every \(k \ge 1\), the result follows.
\end{proof}

\begin{proposition}
Let \(I = (x_1^{a_1}, \dots, x_r^{a_r})\) be a monomial ideal generated by pure powers of variables in \(R \) with \(r \le n\). Then 
\[
\mathrm{v}(\overline{I}) \le \mathrm{v}(I).
\]
\end{proposition}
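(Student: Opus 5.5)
The plan is to compute $\mathrm{v}(I)$ directly, and then to produce a witness for $\overline{I}$ of no larger degree by a socle-type argument in the Artinian ring $K[x_1,\ldots,x_r]/\overline{I}\cap K[x_1,\ldots,x_r]$. We assume all $a_i\geq 1$, so that $I$ is proper.

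\emph{Step 1: computing $\mathrm{v}(I)$.} Set $P=(x_1,\ldots,x_r)$.
\begin{itemize}
\item Since $I$ is a complete intersection in the variables $x_1,\ldots,x_r$, it is $P$-primary, so $\operatorname{Ass}(I)=\{P\}$.
\item For a monomial ideal the $\mathrm{v}$-number is attained by a monomial $f$ (the standard reduction for monomial ideals). So $\mathrm{v}(I)$ is the least degree of a monomial $f$ with $(I:f)=P$.
\item For such an $f$ we need $f\notin I$ and $x_if\in I$ for every $i\leq r$.
\item Since $x_if\in I=(x_1^{a_1},\ldots,x_r^{a_r})$ while $f\notin I$, the only generator that can divide $x_if$ is $x_i^{a_i}$. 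Hence $x_i^{a_i-1}\mid f$ for each $i$.
\item Therefore $\mathrm{v}(I)\geq\sum_{i=1}^r(a_i-1)$. The witness $f=\prod_i x_i^{a_i-1}$ shows equality, although only the lower bound is needed.
\end{itemize}

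\emph{Step 2: structure of $\overline{I}$.} Since $\operatorname{NP}(I)=\{\mathbf b\in\mathbb{R}^n_{\geq0}:\sum_{i\leq r} b_i/a_i\geq 1\}$, the ideal $\overline{I}$ is generated by monomials in $x_1,\ldots,x_r$ only. Moreover, it contains each $x_i^{a_i}$. Thus $\overline{I}$ is the extension of an ideal of $K[x_1,\ldots,x_r]$ that is primary to the maximal ideal there. Hence $\overline{I}$ is $P$-primary, and $\operatorname{Ass}(\overline{I})=\{P\}$. Also $1\notin\overline{I}$, because the origin does not lie in $\operatorname{NP}(I)$.

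\emph{Step 3: finding a witness for $\overline{I}$.}
\begin{itemize}
\item Consider the set $S$ of monomials in $x_1,\ldots,x_r$ that are not in $\overline{I}$.
\item $S$ is nonempty, since it contains $1$. It is also finite, since every element of $S$ lies outside $I\subseteq\overline{I}$ and so has exponents $b_i\leq a_i-1$.
\item Choose $g\in S$ maximal with respect to divisibility. Then $g\notin\overline{I}$ and $x_ig\in\overline{I}$ for all $i\leq r$, so $P\subseteq(\overline{I}:g)$.
\item Conversely, $(\overline{I}:g)$ is generated by monomials $u/\gcd(u,g)$ with $u$ a generator of $\overline{I}$. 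Each such quotient is a nonconstant monomial in $x_1,\ldots,x_r$, because $g\notin\overline{I}$. Hence $(\overline{I}:g)\subseteq P$, and so $(\overline{I}:g)=P\in\operatorname{Ass}(\overline{I})$.
\item Since $g\notin I$, each exponent of $g$ is at most $a_i-1$. Therefore $\mathrm{v}(\overline{I})\leq\deg g\leq\sum_i(a_i-1)\leq\mathrm{v}(I)$.
\end{itemize}

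\emph{Main obstacle.} The only point needing care is the claim in Step 1 that the $\mathrm{v}$-number of a monomial ideal is realized by a monomial. This is standard for monomial ideals, and I would cite it rather than reprove it. The rest is elementary. Note also that $\prod_i x_i^{a_i-1}$ itself may lie in $\overline{I}$, which is why we pass to a maximal monomial outside $\overline{I}$ instead of reusing the witness for $I$.
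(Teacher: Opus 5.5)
Your proof is correct and follows essentially the same route as the paper. Both compute $\mathrm{v}(I)=\sum_{i=1}^r(a_i-1)$ via the witness $\prod_i x_i^{a_i-1}$, and both bound a witness $g$ for $\overline{I}$ by noting that $g\notin\overline{I}\supseteq I$ forces every exponent of $g$ to be at most $a_i-1$. The differences are only in rigor: you prove the lower bound $\mathrm{v}(I)\geq\sum_i(a_i-1)$, which the paper merely asserts. You also construct the witness for $\overline{I}$ as a divisibility-maximal monomial in $x_1,\ldots,x_r$ outside $\overline{I}$. This sidesteps the paper's tacit assumption that a minimal-degree witness for $\overline{I}$ is a monomial in $x_1,\ldots,x_r$ only.
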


\begin{proof}
The ideal \(I\) is \(\mathfrak{p}\)-primary where \(\mathfrak{p} = (x_1, \dots, x_r)\). Consider the monomial 
\(f = x_1^{a_1 - 1} x_2^{a_2 - 1} \cdots x_r^{a_r - 1}\). 
One checks that \((I : f) = (x_1, \dots, x_r) = \mathfrak{p}\), so \(\mathrm{v}(I) \le \deg f\). 
If there existed a monomial \(h\) with \(\deg h < \deg f\) then \((I : h) \neq \mathfrak{p}\) . Thus \(f\) corresponds to the \(\mathrm{v}\)-number, i.e., \(\mathrm{v}(I) = \deg f\). Now let \(g = x_1^{b_1} \cdots x_r^{b_r}\) be a monomial corresponding to \(\mathrm{v}(\overline{I})\), meaning \((\overline{I} : g) \in \operatorname{Ass}(\overline{I})\) and \(\deg g = \mathrm{v}(\overline{I})\). 
If \(b_i \ge a_i\) for some \(i\), then \(g\) would belong to \(I \subseteq \overline{I}\). Hence \(b_i < a_i\) for all \(i = 1, \dots, r\). Consequently,
\(
\deg g = \sum_{i=1}^r b_i \le \sum_{i=1}^r (a_i - 1) = \deg f = \mathrm{v}(I).
\)
Thus \(\mathrm{v}(\overline{I}) \le \mathrm{v}(I)\).
\end{proof}

\begin{theorem}
    Let $I$ be a monomial ideal in \(R\) with $\mathrm{v}(I)=\alpha(I)-1$. If $\overline{I}$ has no embedded associated primes , then $\mathrm{v}(\overline{I})=\alpha(I)-1.$ 
\end{theorem}

\begin{proof}
    Let $f$ be a monomial corresponding to $\mathrm{v}(I).$ Then $(I:f)=\mathfrak p$ with $\deg f=\alpha(I)-1$ for some $\mathfrak p\in \operatorname{Ass}(I).$ It is clear that $(I:f)\subseteq(\overline{I}:f).$ If $(I:f)=(\overline{I}:f),$ then we are done. Now if possible let $(I:f)\subseteq(\overline{I}:f),$ then there exists $\mathfrak q\in\operatorname{Ass}(\overline{I}:f)$ such that $((\overline{I}:f):g)=\mathfrak q.$ This gives $\mathfrak q\in\operatorname{Ass}(\overline{I}).$ Since $\operatorname{Min}\operatorname{Ass}(\overline{I})=\operatorname{Min}\operatorname{Ass}(I)$ and $\mathfrak p\in \operatorname{Ass}(I),$ there exists $\mathfrak{p'}\in\operatorname{Min}\operatorname{Ass}(\overline{I})$ such that $\mathfrak p'\subseteq\mathfrak{p}.$ Therefore $\mathfrak q$ and $\mathfrak p'\in\operatorname{Ass}(\overline{I}),$ where $\mathfrak p'\subseteq \mathfrak p\subset \mathfrak q.$ This is a contradiction to the fact that $\overline{I}$ has no embedded prime. Thus $(I:f)=(\overline{I}:f)=\mathfrak p,$ which gives $\mathrm{v}(\overline{I})=\deg f=\alpha(I)-1.$
\end{proof}

\begin{proposition}\label{stableequigenerated}
Let \( I \) be a stable monomial ideal in  \( R \) that is equigenerated in degree \( d \), and let \( \overline{I} \) denote its integral closure. Then \( \overline{I} \) is also stable and equigenerated in degree \( d \).
\end{proposition}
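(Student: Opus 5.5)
We need to show that $\overline{I}$ is equigenerated in degree $d$ and that it is stable. The plan is to prove equigeneration first and then use the Newton polyhedron description of $\overline{I}$ to transfer the exchange property of $I$ to $\overline{I}$, one minimal generator at a time.

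\textbf{Equigeneration.} Since every generator of $I$ has degree $d$, every point of $E(I)$ satisfies the linear inequality $a_1+\cdots+a_n\ge d$. This inequality is preserved under convex combinations and under adding $\mathbb{R}^n_{\ge 0}$, so it holds on all of $\operatorname{NP}(I)$. Hence every monomial of $\overline{I}$ has degree at least $d$.

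Now let $x^{\mathbf{a}}\in\overline{I}$. Then $\mathbf{a}=\mathbf{c}+\mathbf{r}$ with $\mathbf{c}\in\operatorname{conv}(E(\mathcal{G}(I)))$ and $\mathbf{r}\in\mathbb{R}^n_{\ge0}$, and $\mathbf{c}$ lies in the hyperplane $\sum c_i=d$. The key step is to show that there is a lattice point $\mathbf{b}\le\mathbf{a}$ with $|\mathbf{b}|=d$ and $\mathbf{b}\in\operatorname{NP}(I)$. This is where stability should be used.

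A stable equigenerated ideal $I_d$ is a strongly-stable-like order ideal in the Borel order along the last variable. I would use the standard fact that for a stable ideal $I$, a degree-$d$ monomial $x^{\mathbf{b}}$ lies in $I$ whenever it is dominated, in the sense of partial sums from the left, by some generator in the appropriate order. Convexity of $\operatorname{conv}(E(\mathcal{G}(I)))$ within the simplex should then let us reduce $\mathbf{a}$ greedily. Concretely, repeatedly lower the exponent of the largest-index variable $x_\mu$ with $a_\mu>c_\mu$ (or with excess), keeping the point in $\operatorname{NP}(I)$, until the degree is $d$. A cleaner alternative is to avoid this altogether: pick any minimal generator $x^{\mathbf{a}}$ of $\overline{I}$ and show $|\mathbf{a}|=d$ by proving that $x^{\mathbf{a}}/x_\mu\in\overline{I}$ whenever $|\mathbf{a}|>d$.

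\textbf{Stability.} Take $x^{\mathbf{a}}\in\overline{I}$, let $\mu=\mu(x^{\mathbf{a}})$, and take $i<\mu$; we want $\mathbf{a}-e_\mu+e_i\in\operatorname{NP}(I)$. Write $\mathbf{a}\ge\sum_j\lambda_j\mathbf{g}_j$ with $\mathbf{g}_j\in E(\mathcal{G}(I))$, $\lambda_j\ge0$ and $\sum_j\lambda_j=1$. Applying the shift $\mathbf{g}_j\mapsto\mathbf{g}_j-e_{\mu(\mathbf{g}_j)}+e_i$ to each generator would give the result, but $\mu(\mathbf{g}_j)$ can differ from $\mu$.

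The plan is to use a fractional shift. If $a_\mu>\sum_j\lambda_j g_{j,\mu}$, then the slack already covers the decrease in the $\mu$-coordinate, and adding $e_i$ is harmless. Otherwise $a_\mu=\sum_j\lambda_j g_{j,\mu}$, so $\sum_{j:\,g_{j,\mu}>0}\lambda_j g_{j,\mu}\ge 1$. For those $j$, $\mu(\mathbf{g}_j)\ge\mu$, and stability of $I$ applied iteratively moves one unit from the last variable down toward $x_\mu$ and then from $x_\mu$ to $x_i$; note that stable ideals satisfy $x_i u/x_k\in I$ for all $i<k\le\mu(u)$ with $x_k\mid u$. The move from $x_\mu$ to $x_i$ is exactly what is needed. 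Hence $\mathbf{g}_j-e_\mu+e_i\in E(I)$ whenever $g_{j,\mu}\ge1$.

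Replacing $\mathbf{g}_j$ by $\mathbf{g}_j-e_\mu+e_i$ with weights chosen so that the total $\mu$-decrease is exactly $1$ is then a convex combination of the original and shifted points of $\operatorname{NP}(I)$. The resulting point is dominated by $\mathbf{a}-e_\mu+e_i$, so the latter lies in $\operatorname{NP}(I)$, and $x_i x^{\mathbf{a}}/x_\mu\in\overline{I}$.

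\textbf{Main obstacle.} The delicate step is establishing the stronger exchange property ($x_k\mid u$ with $k\le\mu(u)$) for stable $I$ in the form needed, and likewise, in the equigeneration step, showing that excess degree can be removed while staying inside $\operatorname{NP}(I)$. I would try to obtain both from the same fractional-shift convexity argument.
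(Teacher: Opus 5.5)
Your lower bound $\deg\ge d$ is fine. The upper bound, however, is the real content of ``$\overline I$ is equigenerated'', and it is never proved. You reduce it to showing $x^{\mathbf a}/x_\mu\in\overline I$ (or finding a degree-$d$ lattice point of $\operatorname{NP}(I)$ below $\mathbf a$) whenever $|\mathbf a|>d$. You then only say a greedy reduction ``should'' work, and you list it yourself as the main obstacle.

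Moreover, the two tools you propose for this step are false for stable ideals; they characterize \emph{strongly} stable ones. The first is closure under Borel (partial-sum) domination. The second is the exchange $x_iu/x_k\in I$ for all $i<k\le\mu(u)$ with $x_k\mid u$. For example, $I=(x_1^2,x_1x_2,x_2^2,x_2x_3)$ is stable and equigenerated, yet $x_1(x_2x_3)/x_2=x_1x_3\notin\overline I$. The paper avoids all of this by citation: $\overline I$ is stable and $\operatorname{reg}(\overline I)\le\operatorname{reg}(I)=d$ by Javadekar, and the regularity of a stable ideal equals its maximal generator degree.

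Your fractional-shift idea can in fact close the gap, after two repairs. First, suppose $\mathbf a\ge\mathbf c=\sum_j\lambda_j\mathbf g_j$ with all $\lambda_j>0$. Then $\mu(\mathbf g_j)\le\mu=\mu(\mathbf a)$, so $g_{j,\mu}\ge1$ forces $\mu(\mathbf g_j)=\mu$. Ordinary stability, iterated, then gives $\mathbf g_j-g_{j,\mu}(e_\mu-e_k)\in E(I)$ for each $k<\mu$. Hence every $\mathbf g_j-\sum_{k<\mu}\theta_k(e_\mu-e_k)$ with $\theta_k\ge0$ and $\sum_k\theta_k\le g_{j,\mu}$ lies in $\operatorname{conv}(E(I))$. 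No stronger exchange property is needed.

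Second, you need these multiple shifts, and the case split must be $s:=c_\mu-a_\mu+1\le0$ versus $s>0$, not $a_\mu>c_\mu$ versus equality. With single shifts the available decrease $\sum_{g_{j,\mu}\ge1}\lambda_j$ can be smaller than $s$. For example, take $\mathbf a=(1,1)=\tfrac12(2,0)+\tfrac12(0,2)$ for $(x_1^2,x_1x_2,x_2^2)$.

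With these repairs both statements follow:
\begin{enumerate}
\item \textbf{Stability.} When $s>0$ one has $s\le\min\{1,c_\mu\}$. Moving mass $s$ from coordinate $\mu$ to a single $i<\mu$ gives a point of $\operatorname{NP}(I)$ below $\mathbf a-e_\mu+e_i$, so $\overline I$ is stable.
\item \textbf{Equigeneration.} Take a minimal generator $x^{\mathbf a}$ of $\overline I$ with $|\mathbf a|\ge d+1$, and choose $\mathbf c$ with $|\mathbf c|=d$. The slack $\sum_{k<\mu}(a_k-c_k)=|\mathbf a|-d-(a_\mu-c_\mu)$ is at least $s$. Distributing the mass $s$ over the coordinates $k<\mu$ within their slack yields a point of $\operatorname{NP}(I)$ below $\mathbf a-e_\mu$, contradicting minimality.
\end{enumerate}
Completed this way, your route would be a self-contained alternative to the paper's regularity-based argument.
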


\begin{proof}
Since \( I \) is stable, \cite[Proposition 6.5]{Javadekar26} implies that its integral closure \( \overline{I} \) is stable as well. Now let \( u \) be a minimal generator of \( \overline{I} \). Since \( I \) is stable and equigenerated in degree \( d \), then by \cite[Corollary 7.2.3]{{HerzogHibi11}} its \( \operatorname{reg}(I) = d \). By the integral closure criterion for monomial ideals, there exist an integer \( r \ge 1 \) and monomials \( f_1,\dots,f_r \in I \) such that \( u^r = f_1 \cdots f_r \). Each \( f_i \) has degree at least \( d \), so \( r\deg(u) = \sum_{i=1}^r \deg(f_i) \ge r d \), giving \( \deg(u) \ge d \). On the other hand, \cite[Proposition 6.6]{Javadekar26} yields \( \operatorname{reg}(\overline{I}) \le \operatorname{reg}(I) = d \), and since \( \overline{I} \) is stable, its regularity equals the maximum degree of a minimal generator. Hence \( \deg(u) \le d \). Combining the inequalities forces \( \deg(u) = d \), so \( \overline{I} \) is equigenerated in degree \( d \).
\end{proof}

\begin{corollary}
Let \(I\) be a stable monomial ideal in \( R\) that is equigenerated in degree \(d\). Then, for every \(k \ge 1\),
\[
\mathrm{v}(\overline{I^k}) = \mathrm{v}(I^k) = kd - 1
\]
\end{corollary}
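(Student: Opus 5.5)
The plan is to reduce everything to a single statement about equigenerated stable ideals and then apply it to $I^k$ and to $\overline{I^k}$.

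\textbf{Step 1 (powers).} First I would check that $I^k$ is again stable and equigenerated, in degree $kd$. Products of stable ideals are stable. Indeed, take $u=u_1\cdots u_k$ with $u_j\in \mathcal{G}(I)$, and let $x_{\mu(u)}$ divide some $u_j$. Then $\mu(u_j)=\mu(u)$. For $i<\mu(u)$, stability of $I$ gives $x_i u_j/x_{\mu(u_j)}\in I$, so $x_i u/x_{\mu(u)}\in I^k$. By Proposition~\ref{stableequigenerated} applied to $I^k$, the ideal $\overline{I^k}$ is then stable and equigenerated in degree $kd$.

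\textbf{Step 2 (core claim).} The claim is that a stable ideal $J$ equigenerated in degree $e$ has $\mathrm{v}(J)=e-1$. Given this, applying it with $J=I^k$ and with $J=\overline{I^k}$, both with $e=kd$, yields the corollary.

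For the lower bound, I would invoke \cite[Lemma 3.1]{BiswasMandalSaha26} as in the proof of Theorem~\ref{equigeneratedmprimary}: $\mathrm{v}(J)\ge \alpha(J)-c(J)$. Since $c(J)=1$ here (or directly, since any $f$ with $(J:f)$ prime has $f\notin J$ and $x_if\in J$ for some $i$), this gives $\mathrm{v}(J)\ge e-1$.

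For the upper bound, I would use stable ideals directly. Let $u\in\mathcal{G}(J)$ with $m=\mu(u)$, chosen to maximize $\mu$ among the minimal generators, and set $f=u/x_m$, so $\deg f=e-1$.
\begin{enumerate}
\item Since $J$ is equigenerated in degree $e$, we have $f\notin J$.
\item For $i\le m$, stability gives $x_i f=x_i u/x_m\in J$, so $(x_1,\dots,x_m)\subseteq (J:f)$.
\item Conversely, let $g$ be a monomial in $(J:f)$ involving only $x_{m+1},\dots,x_n$, and take $w\in\mathcal{G}(J)$ dividing $fg$. Then $\mu(w)\le m$ by the choice of $m$, so $w$ involves only $x_1,\dots,x_m$ and hence $w\mid f$, contradicting $f\notin J$.
\end{enumerate}
Since $(J:f)$ is a monomial ideal, it follows that $(J:f)=(x_1,\dots,x_m)$. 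This ideal is prime, and it is associated to $J$ because it equals a colon $(J:f)$. Hence $\mathrm{v}(J)\le e-1$.

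The step I expect to need the most care is the third item of the upper bound: ruling out generators of $(J:f)$ in the variables beyond $x_m$. Maximizing $\mu$ over $\mathcal{G}(J)$ is exactly what makes that argument work.
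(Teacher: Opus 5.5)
Your argument is correct and follows the paper's route. You apply Proposition~\ref{stableequigenerated} to the stable ideal $I^k$, equigenerated in degree $kd$, to get that $\overline{I^k}$ is stable and equigenerated in degree $kd$, and then use that such an ideal has v-number equal to its generating degree minus one. The paper imports that last ingredient from \cite[Propositions 4.10 and 4.16]{MandalPhukan26}, whereas you prove it directly with the explicit witness $f=u/x_m$, where $m$ is the largest $\mu$ among the minimal generators, giving $(J:f)=(x_1,\dots,x_m)$. Your check of stability of $I^k$ only on the products $u_1\cdots u_k$ is enough, by the standard fact that stability need only be verified on a generating set.
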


\begin{proof}

It follows from Proposition  \ref{stableequigenerated} and \cite[Proposition 4.10. and 4.16.]{MandalPhukan26}.

\end{proof}

\noindent In general, $\mathrm{v}(\overline I)$ need not be less than or equal to $\mathrm{v}(I)$, even for stable monomial ideals. The following is a counterexample to this.

\begin{example}
Let 
\(
I=(x^2,\;xy^2,\;y^3,\;y^2z^3)\subseteq K[x,y,z]
\) is a stable monomial ideal. Using Macaulay2 \cite{GraysonStillman} we have 
\(
\overline I=(x^2,\;xy^2,\;y^3,\;xyz^2,\;y^2z^3).
\)
Thus 
\(
\mathrm{v}(I)=2\le \mathrm{v}(\overline I)=3.
\)
\end{example}

\begin{proposition}\label{equigeneratedht(1)orht(2)}
Let \(I \subseteq K[x,y,z]\) be an equigenerated monomial ideal in degree \(d\). If \(\operatorname{ht}(I) = 1\) or \(3\), then
\[
\mathrm{v}(\overline{I}) \le \mathrm{v}(I).
\]
\end{proposition}

\begin{proof}
Suppose first that \(\operatorname{ht}(I) = 1\), the ideal has a minimal associated prime of height one, which must be one of \((x)\), \((y)\), or \((z)\). Therefore, \(u = \gcd(\mathcal{G}(I)) \neq 1\). Write \(I = u I_1\), where \(\gcd(\mathcal{G}(I_1)) = 1\). Choose a variable, say \(x\), dividing \(u\), and choose \(g \in \mathcal{G}(I_1)\) such that \(x \nmid g\). Set
\(
f = (\frac{u}{x}) g.
\)
Then \(\deg f = d - 1\) and \(x \in (I : f)\). Write \(u = x^a u'\), where \(a \ge 1\) and \(x \nmid u'\). Since \(x \nmid g\), we have
\(
f = x^{a-1} u' g.
\)
Let \(h \in (I : f)\). Then \(h f \in I = u I_1\), so the exponent of \(x\) in \(h \) is at least \(1\). This implies \(h \in (x)\). Hence \((I : f) = (x)\). Now, since \(\overline{I} = \overline{u I_1} = u \overline{I_1}\), we have
\(
(\overline{I} : f) = (x).
\)
Therefore, using \cite[Lemma 3.1]{BiswasMandalSaha26}  \(\mathrm{v}(\overline{I}) =\mathrm{v}(I) = d - 1 \). For the case \(\operatorname{ht}(I) = 3\), the result follows from Proposition \ref{equigeneratedmprimary}. This completes the proof.
\end{proof}

\begin{proposition}
\label{equigeneratedht(2)}
Let \(I \subseteq K[x,y,z]\) be an equigenerated monomial ideal in degree \(d\) with \(\operatorname{ht}(I) = 2\). Then
\[
d - 1 \le \mathrm{v}(\overline{I}) \le \min\{d,\ \mathrm{v}(I)\}.
\]
\end{proposition}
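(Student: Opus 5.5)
The plan is to prove the two inequalities separately: the lower bound $d-1\le \mathrm{v}(\overline I)$ from the initial degree, and the two upper bounds $\mathrm{v}(\overline I)\le d$ and $\mathrm{v}(\overline I)\le \mathrm v(I)$ from explicit monomials built out of the structure of a height-two monomial ideal in three variables.

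\emph{Lower bound.} First I show $\alpha(\overline I)=d$, by the same argument as in Proposition~\ref{stableequigenerated}. Take a minimal monomial generator $u$ of $\overline I$ and write $u^r=f_1\cdots f_r$ with $f_i\in I$. This gives $\deg u\ge d$, and $I\subseteq\overline I$ gives the reverse. Then \cite[Lemma 3.1]{BiswasMandalSaha26}, applied exactly as in Theorem~\ref{equigeneratedmprimary}, gives $\mathrm v(\overline I)\ge \alpha(\overline I)-1=d-1$.

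\emph{Setup for the upper bounds.} Since $\operatorname{ht}(I)=2$, we have $\gcd(\mathcal G(I))=1$, as otherwise some $(x_i)$ would be a minimal prime. The minimal primes of $I$ and $\overline I$ coincide, each is one of $(x,y)$, $(x,z)$, $(y,z)$, and the only possible embedded prime is $\mathfrak m$. Fix a minimal prime, say $\mathfrak p=(x,y)$. Its primary component in $\overline I$ is $\overline{J}R$, where $J\subseteq K[x,y]$ is obtained from $I$ by setting $z=1$. Here $J$ is $(x,y)$-primary in two variables, and $\overline{I}_{\mathfrak p}$ corresponds to $\overline J$.

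\emph{The bound $\mathrm v(\overline I)\le d$.} I would use the two-variable picture. Choose two consecutive minimal generators $x^{c}y^{e}$ and $x^{c'}y^{e'}$ of $\overline J$ with $c>c'$, $e<e'$, and consider $g=x^{c-1}y^{e'-1}$, multiplied by a suitable power of $z$. The colon $(\overline I:g z^{t})$ contains $\mathfrak p$, and it equals $\mathfrak p$ exactly when no monomial $z^s$ lies in it. The degree count has to use that every generator of $I$ has degree exactly $d$. Lifting a generator of $\overline J$ back to $R$ produces a monomial of degree $\ge d$ involving $z$, and I expect to be able to choose the pair and $t$ so that $\deg(gz^t)\le d$. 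Mimicking Theorem~\ref{2variablesproof}, the natural candidates are the two generators of $\overline J$ coming from the extreme generators of $J$, since these survive integral closure.

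\emph{The bound $\mathrm v(\overline I)\le\mathrm v(I)$.} Let $f$ be a monomial with $(I:f)=\mathfrak q\in\operatorname{Ass}(I)$ and $\deg f=\mathrm v(I)$. Since $f\notin I$ and the $\mathrm v$-number is at least $d-1$, it suffices to treat the case $\deg f=d-1$; if $\deg f\ge d$, the previous step already gives $\mathrm v(\overline I)\le d\le\mathrm v(I)$. If $\deg f=d-1$, then $f\notin\overline I$ because $\alpha(\overline I)=d$. Hence $(\overline I:f)$ is a proper monomial ideal containing $\mathfrak q$.
\begin{itemize}
\item If $\mathfrak q=\mathfrak m$, then $(\overline I:f)=\mathfrak m$ and we are done.
\item If $\mathfrak q=\mathfrak p=(x,y)$, then $(\overline I:f)$ is either $\mathfrak p$, which finishes the argument, or $(x,y,z^k)$ for some $k\ge1$.
\end{itemize}

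\emph{Main obstacle.} The difficulty is the last subcase $(\overline I:f)=(x,y,z^k)$. Here I would show that $k=1$ is forced, which gives $(\overline I:f)=\mathfrak m\in\operatorname{Ass}(\overline I)$, with the same degree $d-1$. The argument would be: $fz\in\overline I$ and $\deg(fz)=d$, so $fz$ is a degree-$d$ minimal generator of $\overline I$. One should then check, using the Newton polyhedron and the equigeneration in degree $d$, that a monomial $z^k$ with $k\ge2$ in the colon forces $z$ itself in the colon. If this direct argument fails, the fallback is to replace $f$ by a divisor obtained by trading a power of $z$, keeping the degree at most $d-1$, and reduce to the $\mathfrak m$ case.
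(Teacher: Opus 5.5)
Your lower bound, your setup, and your reduction of $\mathrm{v}(\overline I)\le\mathrm{v}(I)$ to a degree-$(d-1)$ witness $f=x^ay^bz^c$ with $(I:f)=(x,y)$ follow the paper's proof. You also correctly handle the subcase $(\overline I:f)=(x,y)$, which the paper passes over. The gap is that the step you call the main obstacle is the entire content of this half, and you leave it unproved; ``trading a power of $z$'' is not an argument. The missing idea is that every point of $\operatorname{NP}(I)$ dominates a point of the degree-$d$ face $\operatorname{conv}(\mathcal{G}(I))$. Suppose $z^k\in(\overline I:f)$. Write $(a,b,c+k)\in D+\mathbb{R}^3_{\ge0}$ with $D=(a-\lambda,\,b-\mu,\,w)\in\operatorname{conv}(\mathcal{G}(I))$ and $\lambda,\mu\ge0$. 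Then $w=d-(a-\lambda)-(b-\mu)=c+1+\lambda+\mu$. Let $A=(a+1,b,c)$ and $B=(a,b+1,c)$ be the exponents of the degree-$d$ generators $xf$ and $yf$. This is the paper's argument: one gets
\[
(a,b,c+1)=\tfrac{1}{1+\lambda+\mu}\,D+\tfrac{\lambda}{1+\lambda+\mu}\,A+\tfrac{\mu}{1+\lambda+\mu}\,B\in\operatorname{conv}(\mathcal{G}(I)),
\]
so $fz\in\overline I$ and $(\overline I:f)=\mathfrak m$.

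For $\mathrm{v}(\overline I)\le d$ your route genuinely differs from the paper's. The paper uses Hoa's bound $\operatorname{reg}(\overline I)\le\delta(I)+\dim R/I=d+1$, and then $\mathrm{v}(\overline I)\le\operatorname{reg}(R/\overline I)$: via \cite{FiorindoGhosh25} if $\mathfrak m\in\operatorname{Ass}(\overline I)$, and via the Geramita bound of \cite{Cooper20} otherwise. Your corner construction can be made to work, but the degree bound is only ``expected''. Your remark that lifts of generators of $\overline J$ have degree $\ge d$ gives a lower bound, which is the wrong direction here. The same face argument closes it:
\begin{itemize}
\item We have $\operatorname{NP}(J)=P+\mathbb{R}^2_{\ge0}$, where $P$ is the projection of $\operatorname{conv}(\mathcal{G}(I))$ forgetting $z$. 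Since $J$ is $(x,y)$-primary, $P$ meets both axes.
\item Using a point of $P$ on the $y$-axis, one checks that the leftmost point $(a_*,\beta)$ of any row of $\operatorname{NP}(J)$ with $a_*>0$ lies in $P$. That is, $(a_*,\beta,d-a_*-\beta)\in\operatorname{conv}(\mathcal{G}(I))$.
\item Take any corner $x^{\alpha}y^{\beta}\notin\overline J$ with $x^{\alpha+1}y^{\beta},\,x^{\alpha}y^{\beta+1}\in\overline J$. Then $\alpha<a_*\le\alpha+1$, hence $(\alpha+1,\beta,d-\alpha-\beta)\in\operatorname{NP}(I)$. The symmetric argument on the column $a=\alpha$ gives $(\alpha,\beta+1,d-\alpha-\beta)\in\operatorname{NP}(I)$.
\item So $h=x^{\alpha}y^{\beta}z^{d-\alpha-\beta}$ satisfies $(\overline I:h)=(x,y)$ with $\deg h=d$. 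No power of $z$ enters the colon because $x^{\alpha}y^{\beta}\notin\overline J$.
\end{itemize}
Every corner works, so the choice of ``extreme'' generators is unnecessary. Completed this way, your argument is elementary and shows that every minimal prime of $\overline I$ is realized by an element of degree $d$. The paper's argument is shorter but rests on three external results.
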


\begin{proof}
Let \(V(I)\) be the set of vertices of \(NP(I)\) and let \(\delta(I) = \max\{|v| : v \in V(I)\}\). Since \(I\) is equigenerated, we have \(\delta(I) = d\). By  \cite[Theorem 2.7]{Hoa22},
\[
\delta(I) \le \operatorname{reg}(\overline{I}) \le \delta(I) + \dim R/I.
\]
Since \(\operatorname{reg}(R/\overline{I}) + 1 = \operatorname{reg}(\overline{I})\), we have
\(
\operatorname{reg}(R/\overline{I}) \le d.
\) We claim that \(\mathrm{v}(\overline{I}) \le \operatorname{reg}(R/\overline{I}) \le d\).
\textbf{Case 1:} Let \(\mathfrak{m} \in \operatorname{Ass}(\overline{I})\). Then \(H^0_{\mathfrak{m}}(R/\overline{I}) \neq 0\). By  \cite[Proposition 2.2]{FiorindoGhosh25}, we have
\[
\mathrm{v}(\overline{I}) \le a_0(R/\overline{I}) \le \operatorname{reg}(R/\overline{I}) \le d.
\]
\textbf{Case 2:} Let \(\mathfrak{m} \notin \operatorname{Ass}(\overline{I})\). Since \(\operatorname{ht}(\overline{I}) = 2\), its minimal primes are generated by two variables. Clearly all its associated primes have the same height, so \(\overline{I}\) is unmixed. Moreover, \(\dim(R/\overline{I}) = 1\) and the associated primes are generated by linear forms. Therefore \(\overline{I}\) is a Geramita ideal. Hence by  \cite[Theorem 4.10]{Cooper20},
\[
\mathrm{v}(\overline{I}) \le \operatorname{reg}(R/\overline{I}) \le d.
\]

\noindent Thus, in all cases, we have \(\mathrm{v}(\overline{I}) \le d\). Now, if \(\mathrm{v}(I) \ge d\), then \(\mathrm{v}(I) \ge \mathrm{v}(\overline{I})\). On the other hand, if \(\mathrm{v}(I) = d - 1\), we want to prove that \(\mathrm{v}(\overline{I}) = d - 1\). Indeed, choose a monomial \(g = x^a y^b z^c\) with \(a + b + c = d - 1\) such that \((I : g) = \mathfrak{p} \in \operatorname{Ass}(I)\). Since \(\operatorname{ht}(I) = 2\), either \(\mathfrak{p} = \mathfrak{m}\) or \(\mathfrak{p}\) is generated by two variables. If \(\mathfrak{p} = \mathfrak{m}\), then \(\mathfrak{m} \subseteq (\overline{I} : g)\). It follows that \((\overline{I} : g) = \mathfrak{m}\), hence \(\mathrm{v}(\overline{I}) = d - 1\).
\noindent Now suppose \((I : g) = (x, y)\). Then \(xg\) and \(yg\) are minimal generators of \(I\). Let
\[
A = (a+1, b, c), \qquad B = (a, b+1, c)
\]
be the exponent vectors of \(xg\) and \(yg\), respectively. Suppose \(z^r \in (\overline{I} : g)\) for some \(r \ge 1\). We claim that \(r = 1\). Let \(Q = (a, b, c + r)\), the exponent vector of \(g z^r \in \overline{I}\). Since \(Q \in NP(I) = \operatorname{conv}(\mathcal{G}(I)) + \mathbb{R}_{\ge 0}^3\), there exists a point \(D = (u, v, w) \in \operatorname{conv}(\mathcal{G}(I))\) and \((\alpha, \beta, \gamma) \in \mathbb{R}_{\ge 0}^3\) such that
\[
Q = D + (\alpha, \beta, \gamma).
\]
It follows that
\(
u \le a, \quad v \le b, \quad w \le c + r.
\)
Let \(u = a - \lambda\) and \(v = b - \mu\), where \(\lambda, \mu \ge 0\). Since \(D \in \operatorname{conv}(\mathcal{G}(I))\), we have \(u + v + w = d\). Therefore,
\(
w = d - u - v = d - a - b + \lambda + \mu = c + 1 + \lambda + \mu.
\)
Hence,
\[
(a, b, c+1) = \frac{1}{1 + \lambda + \mu} D + \frac{\lambda}{1 + \lambda + \mu} A + \frac{\mu}{1 + \lambda + \mu} B.
\]
Since \(D, A, B \in \operatorname{conv}(\mathcal{G}(I))\), it follows that \((a, b, c+1) \in \operatorname{conv}(\mathcal{G}(I)) \subseteq NP(I)\). Thus \(x^a y^b z^{c+1} = gz \in \overline{I}\). This implies \(z \in (\overline{I} : g)\). Therefore, \((\overline{I} : g) = (x, y, z) = \mathfrak{m}\), and hence \(\mathrm{v}(\overline{I}) = d - 1\). This completes the proof.

\end{proof}

\begin{theorem}
\label{thm:main-bound}
Let \(I \subseteq K[x,y,z]\) be an equigenerated monomial ideal in degree \(d\). Then
\[
\mathrm{v}(\overline{I^k}) \le \mathrm{v}(I^k)
\]
for all \(k \ge 1\).
\end{theorem}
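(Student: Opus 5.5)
The plan is to reduce to the single-ideal statements already proved, using that \(I^k\) is again an equigenerated monomial ideal in \(K[x,y,z]\), now of degree \(kd\). Since \(I^k\) and \(I\) have the same radical, \(\operatorname{ht}(I^k)=\operatorname{ht}(I)\in\{1,2,3\}\). So it suffices to prove \(\mathrm{v}(\overline{J})\le \mathrm{v}(J)\) for every equigenerated monomial ideal \(J\subseteq K[x,y,z]\) of degree \(D\), and then apply this with \(J=I^k\) and \(D=kd\). We split according to \(\operatorname{ht}(J)\).

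If \(\operatorname{ht}(J)=1\) or \(3\), Proposition \ref{equigeneratedht(1)orht(2)} gives \(\mathrm{v}(\overline{J})\le \mathrm{v}(J)\) directly. For height \(3\) one can also invoke Theorem \ref{equigeneratedmprimary}, which yields \(\mathrm{v}(\overline{I^k})=kd-1\le \mathrm{v}(I^k)\).

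If \(\operatorname{ht}(J)=2\), Proposition \ref{equigeneratedht(2)} gives \(\mathrm{v}(\overline{J})\le \min\{D,\mathrm{v}(J)\}\), so in particular \(\mathrm{v}(\overline{J})\le\mathrm{v}(J)\). Combining the three cases with \(J=I^k\) proves the theorem for all \(k\ge 1\).

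The main point to check is that the hypotheses of the earlier propositions really hold for \(J=I^k\). First, equigeneration is preserved, since every product of \(k\) generators of degree \(d\) has degree \(kd\). Second, the height is preserved because \(\sqrt{I^k}=\sqrt{I}\). In the height-one case we also use the factorization \(I^k=u^kI_1^k\) with \(\overline{u^kI_1^k}=u^k\overline{I_1^k}\); this is exactly the input used in the proof of Proposition \ref{equigeneratedht(1)orht(2)}. The delicate step is the height-two case, where the proof of Proposition \ref{equigeneratedht(2)} relies on Hoa's regularity bound and the Geramita argument. Both of those are stated for an arbitrary equigenerated ideal, so they apply verbatim to \(I^k\), with \(\delta(I^k)=kd\).
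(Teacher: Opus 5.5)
Your proposal is correct and follows the paper's proof: the paper also observes that \(I^k\) is again an equigenerated monomial ideal in \(K[x,y,z]\), of degree \(kd\). It then applies Proposition \ref{equigeneratedht(1)orht(2)} (heights \(1\) and \(3\)) and Proposition \ref{equigeneratedht(2)} (height \(2\)) to it, exactly as you do.
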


\begin{proof}
The result follows from the fact that \(I^k\) is again an equigenerated monomial ideal, combined with Proposition \ref{equigeneratedht(1)orht(2)} and Proposition \ref{equigeneratedht(2)}.
\end{proof}

\noindent In general, the statement is not true for equigenerated ideals in four variables.

\begin{example}
Let
\(
I = (w^4,\ yzw^2,\ xyw^2,\ x^2z^2,\ x^2yw) \subseteq K[x,y,z,w]
\)
be a stable monomial ideal. Using Macaulay2 \cite{GraysonStillman}, we compute
\[
\overline I = (w^4,\ yzw^2,\ xyw^2,\ x^2z^2,\ x^2yw,\ xzw^2,\ xyz^2w).
\]
Thus,
\[
\mathrm{v}(I) = 3 \le \mathrm{v}(\overline I) = 4.
\]
This shows that the inequality \(\mathrm{v}(\overline I) \le \mathrm{v}(I)\) need not hold for equigenerated monomial ideals in four or more variables.
\end{example}

\begin{proposition}
\label{prop:v-stability-squarefree}
Let \(I\) be a squarefree monomial ideal such that \(\mathrm{v}(I) = \alpha(I) - 1\). Then
\[
\mathrm{v}(\overline{I^k}) = \mathrm{v}(I^k) = k\alpha(I) - 1
\]
for all \(k \ge 1\).
\end{proposition}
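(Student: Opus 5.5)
We bound both quantities from below by $k\alpha(I)-1$, and then produce one explicit monomial $f$ of degree $k\alpha(I)-1$ that witnesses both v-numbers.

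\emph{Lower bound.} Put $\alpha=\alpha(I)$. Both $I^k$ and $\overline{I^k}$ have initial degree $k\alpha$. For $\overline{I^k}$ this holds because a monomial $u\in\overline{I^k}$ satisfies $u^r\in I^{kr}$ for some $r\ge1$, which forces $\deg u\ge k\alpha$. We then use \cite[Lemma 3.1]{BiswasMandalSaha26} in the form already applied in Theorem~\ref{equigeneratedmprimary}: a witness $f$ with $(J:f)=\mathfrak p$ must satisfy $fx_i\in J$ for some variable $x_i$, so $\mathrm{v}(J)\ge \alpha(J)-1$. This gives $\mathrm{v}(I^k)\ge k\alpha-1$ and $\mathrm{v}(\overline{I^k})\ge k\alpha-1$.

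\emph{Upper bound.} Since $I$ is squarefree and $\mathrm{v}(I)=\alpha-1$, there is a squarefree monomial $g$ of degree $\alpha-1$ with $(I:g)=\mathfrak p$, where $\mathfrak p=(x_{i_1},\dots,x_{i_s})\in\operatorname{Ass}(I)$. In particular $gx_{i_j}\in\mathcal G(I)$ is a generator of degree $\alpha$ for each $j$. Choose one such generator $u=gx_{i_1}$ and set
\[
f=u^{k-1}g .
\]
Then $\deg f=k\alpha-1$, and $fx_{i_j}=u^{k-1}(gx_{i_j})\in I^k$, so $\mathfrak p\subseteq (I^k:f)\subseteq(\overline{I^k}:f)$. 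It remains to prove the reverse inclusion $(\overline{I^k}:f)\subseteq\mathfrak p$. Since the ideals are monomial, it suffices to show that no monomial $h\notin\mathfrak p$ satisfies $hf\in\overline{I^k}$.

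\emph{Main step (reverse inclusion).} Let $\phi$ be the substitution that sets $x_{i_1},\dots,x_{i_s}$ equal to $1$, i.e.\ localization at $\mathfrak p$ in the monomial sense. If $hf\in\overline{I^k}$ with $h\notin\mathfrak p$, then $\phi(hf)\in\phi(\overline{I^k})=\overline{\phi(I)^k}$, because localization commutes with integral closure and with powers. Here $\phi(I)=I_{\mathfrak p}\cap R'$, where $R'$ is the polynomial ring in the variables outside $\mathfrak p$.

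Because $\mathfrak p$ is a minimal prime of the squarefree ideal $I$, the ideal $IR_{\mathfrak p}$ equals $\mathfrak pR_{\mathfrak p}$. I expect this to be the crux, and plan to argue it via monomial localization. After setting the $\mathfrak p$-variables to $1$, the image $\phi(I)$ is the unit ideal. So I would instead localize by inverting the variables outside $\mathfrak p$: the extension $I R[x_j^{-1}: x_j\notin\mathfrak p]$ equals $\mathfrak p$ extended, since $I$ is squarefree with minimal prime $\mathfrak p$.

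In that localization, the powers $\mathfrak p^k$ are integrally closed because $\mathfrak p$ is generated by variables. Hence $hf\in\overline{I^k}$ implies that $hf$ has degree at least $k$ in the $\mathfrak p$-variables. The monomial $f=u^{k-1}g$ has $\mathfrak p$-degree $(k-1)\deg_{\mathfrak p}(u)+\deg_{\mathfrak p}(g)$. Since $(I:g)=\mathfrak p$ and $g\notin I$, $g$ contains no $\mathfrak p$-variable in a way that completes a generator, and $u=gx_{i_1}$. I would therefore choose $g$ so that $\deg_{\mathfrak p}g=0$ (possible since $g$ is squarefree and $gx_{i_1}$ is a generator whose support meets $\mathfrak p$ only in $x_{i_1}$). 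Then $\deg_{\mathfrak p}f=k-1<k$, which contradicts $hf\in\overline{I^k}$ when $h$ is free of $\mathfrak p$-variables.

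The obstacle I anticipate is justifying $\deg_{\mathfrak p}g=0$, which needs $\mathfrak p$ to be the prime attained by the witness with that property. If this fails, I would fall back on comparing $\mathfrak p$-degrees using the Newton polyhedron inequality $\sum_{x_j\in\mathfrak p}a_j\ge k$ valid on $\mathrm{NP}(I^k)$. This inequality holds because each generator of $I$ meets the minimal vertex cover $\mathfrak p$.

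With the reverse inclusion established we get $(I^k:f)=(\overline{I^k}:f)=\mathfrak p$. Since $\mathfrak p$ is minimal over $I$, it is associated to both $I^k$ and $\overline{I^k}$. Combined with the lower bound, this gives $\mathrm{v}(\overline{I^k})=\mathrm{v}(I^k)=k\alpha(I)-1$.
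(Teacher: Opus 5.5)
Your witness $f=u^{k-1}g=x_{i_1}^{k-1}g^k$ is exactly the one the paper uses. Your membership criterion (every monomial in $\overline{I^k}$ has $\mathfrak p$-degree at least $k$) is the paper's half-space argument $\mathrm{NP}(I^k)\subseteq\{\mathbf a:\sum_{x_j\in\mathfrak p}a_j\ge k\}$; you reach it by inverting the variables outside $\mathfrak p$, after abandoning the substitution that sets the $\mathfrak p$-variables to $1$, which is not localization at $\mathfrak p$.

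The gap is the step you flag yourself, $\deg_{\mathfrak p}g=0$, and the whole reverse inclusion rests on it. Since $\deg_{\mathfrak p}u=\deg_{\mathfrak p}g+1$, you get $\deg_{\mathfrak p}f=k\deg_{\mathfrak p}g+k-1$. So either criterion gives a contradiction only when $\deg_{\mathfrak p}g=0$; if $\deg_{\mathfrak p}g\ge 1$, then $\deg_{\mathfrak p}(hf)\ge 2k-1\ge k$ and nothing is contradicted. Your parenthetical justification is circular, because it assumes that $gx_{i_1}$ meets $\mathfrak p$ only in $x_{i_1}$, which is the claim. Your fallback to the Newton polyhedron inequality does not help either: it is the same criterion and still needs $\deg_{\mathfrak p}f\le k-1$.

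The missing fact holds for every witness, so there is nothing to choose and no condition on which prime is attained. You already observed that $gx_{i_j}$ is a minimal generator of $I$ for each $j$, since it lies in $I$ and has degree $\alpha(I)$. Minimal generators of a squarefree monomial ideal are squarefree, so $x_{i_j}\nmid g$ for all $j$. Equivalently, if $x_{i_j}\mid g$, then $x_{i_j}g\in I$ with $I$ squarefree forces $g\in I$, contradicting $(I:g)=\mathfrak p\neq R$; this is the fact the paper uses when it asserts that no $x_{i_j}$ divides its witness.

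With this inserted, $\deg_{\mathfrak p}(hf)=k-1$ for every monomial $h\notin\mathfrak p$, the reverse inclusion follows, and your proof coincides with the paper's. Your last step, deducing $(I^k:f)=\mathfrak p$ from $\mathfrak p\subseteq(I^k:f)\subseteq(\overline{I^k}:f)$, also makes explicit the upper bound for $\mathrm{v}(I^k)$, which the paper leaves implicit.
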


\begin{proof}
Let \(f\) be a squarefree monomial such that
\(
\deg f = \alpha(I) - 1 \quad \text{and} \quad (I : f) = \mathfrak{p}.
\)

\noindent Then \(x_{i_j}\) does not divide \(f\) for all \(x_{i_j} \in \mathcal{G}(\mathfrak{p})\). Let \(\mathfrak{p} = (x_{i_1}, \ldots, x_{i_s})\).
\noindent We claim that
\[
(\overline{I^k} : x_{i_1}^{k-1} f^k) = \mathfrak{p}.
\]
Indeed, since
\(
x_{i_j} x_{i_1}^{k-1} f^k = (x_{i_1}^{k-1} f^{k-1})(x_{i_j} f) \in I^k \subseteq \overline{I^k},
\)
it follows that \(\mathfrak{p} \subseteq (\overline{I^k} : x_{i_1}^{k-1} f^k)\).

\noindent In order to prove the other inclusion, let \(u = x_1^{a_1} \cdots x_n^{a_n}\) be a minimal generator of \(I^k\). Define
\(
w_{\mathfrak{p}}(u) = {\sum_{x_j \in \mathfrak{p}}} a_j.
\)
Then \(w_{\mathfrak{p}}(u) \ge k\) for all \(u \in \mathcal{G}(I^k)\).
All exponent vectors of monomials in \(I^k\) lie in 
\(
\left\{ \mathbf{a} : \sum_{x_j \in \mathfrak{p}} a_j \ge k \right\}.
\)
Hence, every point in \(NP(I^k)\) lies in
\(
\left\{ \mathbf{a} : \sum_{x_j \in \mathfrak{p}} a_j \ge k \right\}.
\)
Note that, \(w_{\mathfrak{p}}(f) = 0\), so
\(
w_{\mathfrak{p}}(x_{i_1}^{k-1} f^k) = k - 1.
\)
For any monomial \(v \notin \mathfrak{p}\), we have \(w_{\mathfrak{p}}(v) = 0\), and therefore
\(
w_{\mathfrak{p}}(v x_{i_1}^{k-1} f^k) = k - 1.
\)
Thus, \(v x_{i_1}^{k-1} f^k \notin \overline{I^k}\), which implies
\(
v \notin (\overline{I^k} : x_{i_1}^{k-1} f^k).
\)
Hence,
\[
(\overline{I^k} : x_{i_1}^{k-1} f^k) = \mathfrak{p}.
\]
Therefore,
\[
\mathrm{v}(\overline{I^k}) \le \deg(x_{i_1}^{k-1} f^k) = (k-1) + \deg f^k = (k-1) + k(\alpha(I) - 1) = k\alpha(I) - 1.
\]
Since \(\mathrm{v}(I^k) \ge k\alpha(I) - 1\) for all \(k \ge 1\), we obtain
\[
\mathrm{v}(\overline{I^k}) = \mathrm{v}(I^k) = k\alpha(I) - 1.
\]
This completes the proof.
\end{proof}

\section{v-number of powers of edge ideal and its integral closure}

\noindent In this section, we study the $\mathrm{v}$-numbers of powers of edge ideals and their integral closures. We prove that if $G$ is a simple connected graph, then
\(
\mathrm{v}\!\left(\overline{I(G)^k}\right)
=
\mathrm{v}\!\left(I(G)^k\right)
\)
for all $k\geq 1+|E(G)|$. We also consider the case when $G$ is disconnected and show that
\(
\mathrm{v}\!\left(\overline{I(G)^k}\right)
\leq
\mathrm{v}\!\left(I(G)^k\right)
\)
for all sufficiently large $k$.\\

\noindent Suppose that \(G\) is a simple non-bipartite connected graph. Let \(V(G)\), \(E(G)\), and \(\varepsilon_0(G)\) denote its vertex set, edge set, and number of leaf edges of \(G\), respectively. It has a spanning tree \(T\) which has exactly \(|V(G)|-1\) edges.  Since \(G\) is non-bipartite, it contains an odd cycle. Then
\(
|E(G)| \ge |V(G)|.
\)
Let \(2m-1\) be the maximum length of odd cycles of \(G\), where \(m \ge 2\). Then
\(
|V(G)| - \varepsilon_0(G) - m + 1 \le 1 + |E(G)|.
\)

\begin{theorem}\label{prop:v-stability}
Let $G$ be a simple connected graph, and let $I(G)$ be its edge ideal. Then
\[
\mathrm{v}(I(G)^k)=\mathrm{v}(\overline{I(G)^k})=2k-1
\]
for all
\(
k\ge 1+|E(G)|\,
\).

\end{theorem}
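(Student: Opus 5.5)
We split according to whether $G$ is bipartite, and in each case prove the lower bound $2k-1$ and then produce an explicit witness of degree $2k-1$.

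\textbf{Lower bound.} For any monomial ideal generated in degree at least $\alpha$ one has $\mathrm{v}\geq \alpha-1$; this is \cite[Lemma 3.1]{BiswasMandalSaha26}, used in the same way as in Theorem~\ref{equigeneratedmprimary}. Both $I(G)^k$ and $\overline{I(G)^k}$ have initial degree $2k$. For the closure this holds because a monomial $u$ with $u^r\in I(G)^{kr}$ has degree at least $2k$. Hence both v-numbers are at least $2k-1$, and it suffices to exhibit monomials of degree $2k-1$ whose colon is an associated prime.

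\textbf{Bipartite case.} By Lemma~\ref{lem:normal-torsion-free-bipartite}, $I(G)$ is normally torsion-free, hence normal, so $\overline{I(G)^k}=I(G)^k$. We only need $\mathrm{v}(I(G)^k)=2k-1$. Choose an edge $e=x_1x_2$, and let $f$ be an element of degree $1$ with $(I(G):f)$ prime. For instance, take $f=x_1$, so that $(I(G):x_1)=(N(x_1))$, where $N(x_1)$ is the neighbourhood of $x_1$; this works because $G$ is simple and $I(G)$ is squarefree. The hypothesis $\mathrm{v}(I(G))=\alpha-1=1$ then holds. Proposition~\ref{prop:v-stability-squarefree} gives $\mathrm{v}(\overline{I^k})=\mathrm{v}(I^k)=2k-1$ for all $k$, which in particular covers the bipartite case.

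\textbf{Non-bipartite case.} The same argument applies, and the computation does not depend on normality. Let $f=x_1$ and $\mathfrak p=(N(x_1))$, which is an associated prime of $I(G)$. Proposition~\ref{prop:v-stability-squarefree} then applies verbatim to any squarefree $I$ with $\mathrm{v}(I)=\alpha(I)-1$, for every $k\geq 1$. It yields the witness $x_{i_1}^{k-1}x_1^k$, where $x_{i_1}\in N(x_1)$. The prime $\mathfrak p=(N(x_1))$ is an associated prime of both $I(G)^k$ and $\overline{I(G)^k}$, being realized as a colon. The weight argument $w_{\mathfrak p}$ from that proof shows the colon is exactly $\mathfrak p$.

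\textbf{Main obstacle.} The step needing care is verifying $(I(G):x_1)=(N(x_1))$ is prime with $\deg x_1=1=\alpha-1$. This is immediate, since the colon is generated by the neighbours of $x_1$ together with edges not containing $x_1$, and every such edge is redundant only if it meets $N(x_1)$. That redundancy is exactly where the argument must be checked. If it fails, the colon is $(N(x_1))+I(G\setminus N[x_1])$, which is not prime, and the argument breaks.

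In that situation the large-$k$ hypothesis is what I would use instead. I would choose $f$ built from a maximal odd cycle $C_{2m-1}$ and leaves, so that the colon by $f$ is $\mathfrak m$. This is the role of the estimate $|V(G)|-\varepsilon_0(G)-m+1\le 1+|E(G)|$ preceding the theorem. I would then show that $(\mathfrak m$ is attained$)$ for $I(G)^k$ with a witness of degree $2k-1$ once $k\ge 1+|E(G)|$, and transfer the witness to the closure using $I^k\subseteq\overline{I^k}$. The final check is that no monomial outside $\mathfrak m$ enters the colon, which is automatic for $\mathfrak m$.
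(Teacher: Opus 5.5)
There is a genuine gap, and it is exactly at the step you flag as the ``main obstacle'' and then call immediate. For a vertex $x_1$ one has $(I(G):x_1)=(N(x_1))+I(G\setminus N[x_1])$. This is prime only if $V(G)\setminus N[x_1]$ is an independent set, so $\mathrm{v}(I(G))=1=\alpha(I(G))-1$ holds only for graphs that have such a vertex. No vertex of $C_6$, $P_6$ or $C_7$ qualifies. For instance $(I(C_6):x_1)=(x_2,x_6,x_3x_4,x_4x_5)$, and in fact $\mathrm{v}(I(C_6))=2$. Hence Proposition~\ref{prop:v-stability-squarefree} is not available in general. A warning sign you should have caught: your main argument would give $\mathrm{v}(I(G)^k)=2k-1$ for every $k\ge 1$. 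That would make the hypothesis $k\ge 1+|E(G)|$ pointless, and it is false at $k=1$ for $C_6$.

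Your fallback is the paper's argument for the non-bipartite case, and it works there. It takes a degree-$(2k-1)$ monomial $f$ with $(I(G)^k:f)=\mathfrak m$ from \cite[Lemma 3.1]{MauTrung21}, valid for $k\ge |V(G)|-\varepsilon_0(G)-m+1$. It then transfers this to the closure because $f\notin\overline{I(G)^k}$ by degree.

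The fallback cannot help for bipartite $G$. There $I(G)$ is normally torsion-free, so $\operatorname{Ass}(I(G)^k)=\operatorname{Min}(I(G))$, and $\mathfrak m$ is never minimal over an edge ideal. So for connected bipartite graphs such as $C_6$ or $P_6$ you have no upper bound $\mathrm{v}(I(G)^k)\le 2k-1$ at all; normality only reduces the claim to $I(G)^k$. The paper fills this by citing \cite[Theorem 5.2]{BiswasMandalSaha26}.

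A direct fix is also available. Let $X\sqcup Y$ be the bipartition, pick $x_0\in X$, and set $f=x_0\prod_{e\in E(G)}e^{t_e}$ with all $t_e\ge 1$ and $\sum_e t_e=k-1$. This is possible exactly when $k\ge 1+|E(G)|$. Use $I(G)^k=\bigcap_C\mathfrak p_C^k$, where $C$ runs over minimal vertex covers, and compute the $C$-weight of $f$ for each $C$:
\begin{enumerate}
\item[(i)] the $Y$-weight of $f$ is $k-1$;
\item[(ii)] the $X$-weight of $f$ is $k$;
\item[(iii)] any other minimal cover $C$ contains both ends of some edge, since otherwise $(C,V(G)\setminus C)$ would be a second bipartition of the connected graph $G$; hence its weight on $f$ is at least $k$.
\end{enumerate}
Therefore $(I(G)^k:f)=(Y)$ with $\deg f=2k-1$.
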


\begin{proof}
If $G$ is a bipartite graph, then by \cite[Proposition 2.1]{SimisVasconcelosVillarreal98}, $I(G)$ is normal.

\noindent If $G$ is a non-bipartite graph, and let $2m-1$ be the maximum length of odd cycles of $G$, then by \cite[Lemma 3.1]{MauTrung21},  for any
\(
k\ge |V(G)|-\varepsilon_0(G)-m+1,
\)
there is a monomial $f$ of degree $2k-1$ such that
\(
\mathfrak m = (I(G)^k:f)
\).
Therefore
\(
\mathrm{v}(I(G)^k)\le 2k-1.
\)
Again using \cite[Lemma 3.1]{BiswasMandalSaha26} , we have
\(
\mathrm{v}(I(G)^k)\ge \alpha(I(G)^k)-1=2k-1.
\)
Moreover, $f\notin \overline{I(G)^k}$, since $\deg f < \alpha(\overline{I(G)^k})=2k$. Therefore
\(
\mathfrak m = (\overline{I(G)^k}:f).
\)
Thus
\(
\mathrm{v}(I(G)^k)=2k-1.
\)
Hence, using \cite[Theorem 5.2]{BiswasMandalSaha26}  for a simple connected graph $G$,
\[
\mathrm{v}(I(G)^k)=\mathrm{v}(\overline{I(G)^k})=2k-1
\]
for all
\(
k\ge 1+|E(G)|\,
\).
\end{proof}

\begin{corollary}

Let \(I(G)\) be the edge ideal of a simple connected graph \(G\). Suppose that \(J\) is a monomial ideal such that
\(
I(G)^k\subseteq J\subseteq \overline{I(G)^k}
\)
for some \(k\geq 1+|E(G)|\). Then
\[
\mathrm{v}(J)=2k-1.
\]
\end{corollary}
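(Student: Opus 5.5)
The plan is a sandwich argument between $I(G)^k$ and $\overline{I(G)^k}$, using the common witness $f$ produced in the proof of Theorem~\ref{prop:v-stability}.

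\textbf{Step 1 (the witness).} Fix $k\geq 1+|E(G)|$. If $G$ is bipartite, then $I(G)$ is normal by Lemma~\ref{lem:normal-torsion-free-bipartite} and \cite{SimisVasconcelosVillarreal98}. Hence $I(G)^k=\overline{I(G)^k}$, so $J=I(G)^k$ and Theorem~\ref{prop:v-stability} gives $\mathrm{v}(J)=2k-1$ at once. If $G$ is non-bipartite, the preliminary inequality before Theorem~\ref{prop:v-stability} gives $k\geq |V(G)|-\varepsilon_0(G)-m+1$. So \cite[Lemma 3.1]{MauTrung21} supplies a monomial $f$ of degree $2k-1$ with $(I(G)^k:f)=\mathfrak m$. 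As in the proof of Theorem~\ref{prop:v-stability}, $f\notin\overline{I(G)^k}$ because $\alpha(\overline{I(G)^k})=2k$, and therefore $(\overline{I(G)^k}:f)=\mathfrak m$.

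\textbf{Step 2 (the upper bound).} Colon ideals are monotone in the first argument, so the inclusions $I(G)^k\subseteq J\subseteq\overline{I(G)^k}$ give
\[
\mathfrak m=(I(G)^k:f)\subseteq (J:f)\subseteq(\overline{I(G)^k}:f)=\mathfrak m .
\]
Thus $(J:f)=\mathfrak m$. Since $f\notin J$, the ideal $\mathfrak m$ is an associated prime of $J$, and $f$ witnesses $\mathrm{v}(J)\leq 2k-1$.

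\textbf{Step 3 (the lower bound).} I will compute $\alpha(J)=2k$. Indeed, $J\supseteq I(G)^k$ gives $\alpha(J)\leq 2k$. Conversely, $J\subseteq\overline{I(G)^k}$ and every monomial of $\overline{I(G)^k}$ has degree at least $2k$: if $u^r$ is a product of $r$ monomials of $I(G)^k$, then $r\deg u\geq 2kr$. Applying \cite[Lemma 3.1]{BiswasMandalSaha26} as in Theorem~\ref{equigeneratedmprimary}, I get $\mathrm{v}(J)\geq\alpha(J)-1=2k-1$.

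The only point needing care is the nondegeneracy check that $f\notin J$, i.e. that $(J:f)$ is a proper ideal. This is exactly what the degree bound $\deg f<\alpha(\overline{I(G)^k})$ provides, so I expect no real obstacle.
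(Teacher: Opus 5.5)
Your proposal is correct and takes essentially the same route as the paper, which records $\alpha(J)=2k$ from $\alpha(\overline{I(G)^k})\le\alpha(J)\le\alpha(I(G)^k)$ and then reruns the argument of Theorem~\ref{prop:v-stability}: normality in the bipartite case, the Mau--Trung witness $f$ of degree $2k-1$ in the non-bipartite case, and the lower bound $\mathrm{v}\ge\alpha-1$. Your squeeze $(I(G)^k:f)\subseteq(J:f)\subseteq(\overline{I(G)^k}:f)$ is simply an explicit form of the paper's step ``$f\notin J$ since $\deg f<\alpha(J)$'', and the bound $\mathrm{v}(J)\ge\alpha(J)-1$ is cited in the paper as \cite[Lemma 3.1]{BiswasMandalSaha26} in the proof of Theorem~\ref{prop:v-stability}, which is a better reference than Theorem~\ref{equigeneratedmprimary}.
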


\begin{proof}
Since
\(
\alpha(\overline{I(G)^k})\leq \alpha(J)\leq \alpha(I(G)^k)
\)
and
\(
\alpha(\overline{I(G)^k})=\alpha(I(G)^k)=2k,
\)
we have \(\alpha(J)=2k\).
The rest follows by the same argument as in the preceding Theorem \ref{prop:v-stability}, which completes the proof.
\end{proof}

\noindent The above theorem fails when we do not consider the graph \(G\) to be connected. The following example highlights, when \(G\) is a disconnected graph, then
\(
\mathrm{v}(\overline{I(G)^k}) < \mathrm{v}(I(G)^k)
\)
for some \(k\).

\begin{example}
Let \(G\) be the union of two disjoint odd cycles of length \(3\). Then
\[
\mathrm{v}(\overline{I(G)^7}) < \mathrm{v}(I(G)^7).
\]
\end{example}

\begin{proof}

Throughout this example, let \(G = C_3 \cup C_3\) denote the union of two disjoint odd cycles of length \(3\), and let \(R = K[x_1,x_2,x_3,y_1,y_2,y_3]\) be the polynomial ring over a field \(K\) in six variables. The edge ideal of \(G\) decomposes as \(I(G) = I_1 + I_2\), where
\[
I_1 = (x_1x_2,\ x_1x_3,\ x_2x_3), \qquad
I_2 = (y_1y_2,\ y_1y_3,\ y_2y_3).
\]
Furthermore, set \(X = x_1x_2x_3\), \(Y = y_1y_2y_3\), and \(M = XY\).

\noindent By \cite[Theorem 10.5.12]{Vil26}, for each pair \(C_i, C_j\) of induced disjoint odd cycles, we set \(m_{C_i,C_j} = x_{C_i}x_{C_j}\). If \(P\) denotes the set of all pairs of disjoint odd cycles in \(G\), then
\[
\overline{R[It]} = R[It,\ m_{C_i,C_j}t^{m_{i,j}/2} \mid (C_i,C_j) \in P],
\]
where \(m_{i,j} = \deg m_{C_i,C_j}\). In our case, we have \(\overline{R[It]} = R[It,\ Mt^3]\), and consequently
\[
[\overline{R[It]}]_7 = (I^7 + MI^4)t^7.
\]

\noindent Since \(I_1\) and \(I_2\) are squarefree monomial ideals of degree \(2\), with \(\mathrm{v}(I_1) = \alpha(I_1) - 1\) and \(\mathrm{v}(I_2) = \alpha(I_2) - 1\), it follows from \cite[Corollary 4.2]{BiswasMandalSaha26} that
\[
\mathrm{v}(I_1^q) = 2q - 1 \quad\text{and}\quad \mathrm{v}(I_2^q) = 2q - 1
\]
for all \(q \ge 1\). Moreover, \(\mathrm{v}_{\mathrm{stab}}(I_1) = \mathrm{v}_{\mathrm{stab}}(I_2) = 1\), and hence  by \cite[Theorem 5.2]{FicarraMarques25}, 
\[
\mathrm{v}((I_1 + I_2)^k) = 2k + (2 + 2 - 2 - 2) = 2k.
\]
Thus, for \(k = 7\), we obtain \(\mathrm{v}(I(G)^7) = 14\).

\noindent Now  \( \overline{I^7} = I^7 + MI^4\), and consider the monomial \(f = M y_3 (x_1x_2)^3\). A direct computation gives
\[
(\overline{I^7} : f) = (x_1, x_2, x_3, y_1, y_2) = \mathfrak{p} \in \operatorname{Ass}(\overline{I^7}).
\]
Therefore, \(\mathrm{v}(\overline{I^7}) = 13 < 14 = \mathrm{v}(I^7)\). This shows that the inequality \(\mathrm{v}(\overline{I(G)^k}) < \mathrm{v}(I(G)^k)\) can indeed occur for disconnected graphs.

\end{proof}

The preceding example naturally raises the following question: for a simple disconnected graph \(G\), does the inequality
\(
\mathrm{v}(\overline{I(G)^k}) \le \mathrm{v}(I(G)^k)
\)
hold for all sufficiently large integers \(k\)? In order to address this,  we prove the next theorem.

\begin{definition}
\label{def:do}
\cite[Definition 3.3]{MauTrung21}
Let \(G\) be a graph with connected components \(G_1,\ldots,G_r\) such that all \(G_1,\ldots,G_r\) are non-bipartite. For each \(i=1,\ldots,r\), let \(2m_i-1\) be the maximum length of odd cycles of \(G_i\). Let \(2m-1\) be the minimum length of odd cycles of \(G\). Let
\[
d_0=
\begin{cases}

\sum_{i=1}^{r}(|V(G_i)|-\varepsilon_0(G_i)-m_i)+s+1 & \text{if } r=2s+1,  \text{for}\ s\ge 0,\\[4pt]
\sum_{i=1}^{r}(|V(G_i)|-\varepsilon_0(G_i)-m_i)+s+m & \text{if } r=2s,    \text{for}\ s\ge 1.
\end{cases}
\]
\end{definition}

\begin{theorem}\label{v-numberofdisconnectedgraphs}
Let \(G\) be a finite simple disconnected graph with no isolated vertices whose non-trivial connected components consist of bipartite graphs \(H_1,\ldots,H_b\) and non-bipartite graphs \(G_1,\ldots,G_r\), where \(r \ge 1\). Let \(c(G) = b + r\) denote the number of connected components of \(G\), and set \(I = I(G)\). Then, for all sufficiently large integers \(k\),
\[
2k - 1 \le \mathrm{v}(\overline{I(G)^k}) \le 2k + b - 1 \le \mathrm{v}(I(G)^k) = 2k + c(G) - 2.
\]
\end{theorem}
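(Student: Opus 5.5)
Four inequalities are needed, and I would prove them one at a time.

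\textbf{Outer bounds.} The lower bound $2k-1\le \mathrm{v}(\overline{I(G)^k})$ comes from \cite[Lemma 3.1]{BiswasMandalSaha26}. That lemma gives $\mathrm{v}(J)\ge \alpha(J)-1$ for monomial ideals $J$. Here $\alpha(\overline{I(G)^k})=2k$, because every monomial $u$ with $u^r\in I(G)^{kr}$ has degree at least $2k$. The equality $\mathrm{v}(I(G)^k)=2k+c(G)-2$ for $k\gg0$ comes from the sum formula \cite[Theorem 5.2]{FicarraMarques25}, applied iteratively to $I(G)=\sum_j I(H_j)+\sum_i I(G_i)$ in disjoint sets of variables, exactly as in the preceding example. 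The input is the stable value for each component. For a connected component $C$ with $k\gg0$ we have $\mathrm{v}(I(C)^k)=2k-1$ by Theorem \ref{prop:v-stability}, so $\mathrm{v}_{\mathrm{stab}}=1$ and $\alpha=2$ for each summand. Each additional summand therefore contributes $+1$ to the stable v-number, giving $2k+(b+r)-2$. The middle inequality $2k+b-1\le 2k+c(G)-2$ is just $r\ge 1$.

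\textbf{Main step: $\mathrm{v}(\overline{I(G)^k})\le 2k+b-1$.} I would construct an explicit witness. The plan is to use the description of $\overline{R[It]}$ from \cite[Theorem 10.5.12]{Vil26}. Since the $H_j$ are bipartite, only the non-bipartite part generates the extra elements $m_{C_i,C_j}t^{m_{i,j}/2}$. Consequently $\overline{I(G)^k}$ contains $\overline{I(G')^{k'}}\cdot I(H)^{k''}$ for all $k'+k''=k$, where $G'=G_1\cup\dots\cup G_r$ and $H=H_1\cup\dots\cup H_b$. I would take $f=f'\cdot g_1\cdots g_b$. Here each $g_j$ is a single vertex $y_j$ of $H_j$, so $(I(H_j):y_j)=N(y_j)$ is generated by variables. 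The factor $f'$ is a monomial of degree $2k'-1$ in the variables of $G'$, and I want $(\overline{I(G')^{k'}}:f')$ to be a prime $\mathfrak q$.

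\textbf{Constructing $f'$.} For this I would use the odd cycles. The aim is an element of the form $M\cdot(\text{edge products})\cdot x$, generalising the example's $f=My_3(x_1x_2)^3$. The point is that the extra generators $m_{C_i,C_j}$ allow all $r$ non-bipartite components to behave like one connected non-bipartite graph, which yields degree $2k'-1$ and not $2k'+r-2$. When $r=1$, $\overline{I(G_1)^{k'}}$ with $k'\gg0$ already has v-number $2k'-1$ with colon $\mathfrak m_{G_1}$, by Theorem \ref{prop:v-stability}.

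\textbf{Assembling the witness.} Degree bookkeeping gives $\deg f=(2k'-1)+b$ with $k'=k-b$. Adding $b$ edges would restore $k$, so I would choose $k'=k-b$ and multiply by one edge of $H_j$ together with the vertex $y_j$ in each bipartite component. That is, I would take $g_j=e_jy_j$, of degree $3$, and count $\deg f=2(k-b)-1+3b=2k+b-1$. Then I need to verify $(\overline{I(G)^k}:f)=\mathfrak q+\sum_j N(y_j)$. I would prove this using the weight-function (Newton polyhedron) inequalities, as in Proposition \ref{prop:v-stability-squarefree}. For each bipartite component and each variable outside the claimed prime, I would exhibit a linear functional valid on $\mathrm{NP}(I(G)^k)$ that separates $f\cdot v$; the colon ideal then does not contain $v$.

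\textbf{Expected obstacle.} Verifying this colon ideal is where I expect the main difficulty, specifically controlling how degree shifts between components in $\overline{I^k}$. The valid inequalities are $\sum_{x\in V(H_j)\text{ side}}a_x\le\ldots$-type bipartite matching constraints, together with the odd-cycle constraints from \cite{Vil26}. If that fails, the fallback is to bound by a sum formula for integral closures of sums in disjoint variables, $\overline{(A+B)^k}=\sum\overline{A^i}\,\overline{B^{k-i}}$, and run the argument of \cite[Theorem 5.2]{FicarraMarques25} with $\mathrm{v}(\overline{I(G')^{k'}})=2k'-1$ for $k'\gg0$.
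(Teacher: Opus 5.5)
Your outer bounds are fine: the lower bound via $\alpha(\overline{I(G)^k})=2k$, the equality via the sum formula of \cite{FicarraMarques25}, and the middle inequality from $r\ge1$. The main step, however, has a genuine gap: the witness $f=f'u$ with $u=\prod_j e_jy_j$ and claimed colon $\mathfrak q+\sum_j N(y_j)$ fails in general.

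Put $L=I(H)$, $J=I(G')$ and $k'=k-b$, so that $u\in L^b$. For $x\in N(y_j)$ we have $xu\in L^{b+1}\setminus L^{b+2}$. By $\overline{I(G)^k}=\sum_i L^i\,\overline{J^{k-i}}$ \cite[Theorem 2.1]{MauTrung21}, the condition $xf\in\overline{I(G)^k}$ therefore forces $f'\in\overline{J^{k'-1}}$. But then every $m\in L$ satisfies $mf=(mu)f'\in L^{b+1}\,\overline{J^{k-b-1}}\subseteq\overline{I(G)^k}$, so the colon contains every edge of every $H_j$. Hence it can equal $\mathfrak q+\sum_jN(y_j)$ only if each $N_{H_j}(y_j)$ is a vertex cover of $H_j$, which already fails for $H_j=C_6$.

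Worse, the part of the colon in the variables of $H$ is exactly $(L^{b+1}:u)$. So your gadget needs $u$ to be a v-number witness of degree $3b$ for $L^{b+1}$, and this can be impossible. For $G=C_3\cup P_8$, the variables of $(I(P_8)^2:e_1y_1)$ are those of $N(y_1)$, together with $N(s)$ when $e_1=st$ and $t\in N(y_1)$; hence there are at most three. Yet this colon contains $I(P_8)$, and every vertex cover of $P_8$ has four vertices. No choice of Newton-polyhedron functionals can repair this, since the colon really contains monomials outside any prime of the intended shape. In any case, separating only the variables outside the prime would not prove containment; all monomials in the complementary variables must be excluded.

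Your fallback formula $\overline{(A+B)^k}=\sum_i\overline{A^i}\,\overline{B^{k-i}}$ is also false without hypotheses: take $A=(x^2)$, $B=(y^2)$, $k=1$. It holds here only because $L$ is squarefree and normally torsion-free.

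The missing idea is to take the bipartite factor in the stable range rather than at the fixed power $b+1$. The paper chooses $a\gg0$ with $\mathrm{v}(L^a)=2a+b-2$ \cite[Corollary 5.4]{FicarraMarques25}, and $u$ with $(L^a:u)=\mathfrak p$ and $\deg u=2a+b-2$. Then $u\in L^{a-1}$: if $e_1\cdots e_a\mid xu$ with $x\mid e_1$, then $e_2\cdots e_a\mid u$.

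On the non-bipartite side, the paper takes $f_d$ from the construction in \cite[Lemma 3.4]{MauTrung21}, with $(\overline{J^d}:f_d)=\mathfrak n$ and $\deg f_d=2d-1$. It checks $f_d^2\in J^{2d-1}$, so $f_d\in\overline{J^{d-1}}$. This $f_d$ is what should replace your unconstructed $f'$ for $r\ge2$.

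With $d=k-a+1$, consider $(\overline{I(G)^k}:uf_d)=\sum_i(L^i:u)(\overline{J^{k-i}}:f_d)$. The summands with $i\le a-1$ lie in $\mathfrak n$, with equality at $i=a-1$. Those with $i\ge a$ lie in $\mathfrak p$, with equality at $i=a$. So the colon is the prime $\mathfrak p+\mathfrak n$, and $\deg(uf_d)=2k+b-1$.

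Your construction therefore lacks two things. First, a bipartite factor $u$ with $(L^a:u)$ prime and $\deg u=2a+b-2$, which in general exists only once $a$ is in the stable range. Second, the membership $f_d\in\overline{J^{d-1}}$ for the non-bipartite factor, which is what makes the colon collapse to a prime.
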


\begin{proof}
Let \(H = H_1 \cup \cdots \cup H_b\) and \(F = G_1 \cup \cdots \cup G_r\) denote the union of the bipartite and non-bipartite components of \(G\), respectively, and set \(L = I(H)\), \(J = I(F)\). The variables occurring in \(L\) and \(J\) are disjoint, and \(I(G) = L + J\). If \(b = 0\), then \(L = 0\) and \(I(G) = J\). In this case, by \cite[Lemma 3.4]{MauTrung21} and  \cite[Lemma 3.1]{BiswasMandalSaha26}, we have
\(
\mathrm{v}(\overline{I(G)^k}) = 2k - 1.
\) Now assume \(b \ge 1\). Since \(L\) is normally torsion-free, by \cite[Theorem 2.1]{MauTrung21} we have
\[
\overline{(L + J)^k} =  \sum_{i=0}^k L^i \overline{J^{\,k-i}}.
\]
Again, by \cite[Corollary 5.4]{FicarraMarques25}, \(\mathrm{v}(L^a) = 2a + b - 2\) for all sufficiently large \(a\). Choose such an \(a\) large enough, and let \(u\) be a monomial and \(\mathfrak{p} \in \operatorname{Ass}(L^a)\) such that
\[
(L^a : u) = \mathfrak{p}, \qquad \deg u = \mathrm{v}(L^a) = 2a + b - 2.
\]
We claim that \(u \in L^{a-1}\). Indeed, for any \(x \in \mathfrak{p}\), we have \(xu \in L^a\). Hence there exist edges \(e_1,\ldots,e_a \in L\) such that \(e_1\cdots e_a\) divides \(xu\), which implies \(u \in L^{a-1}\), as claimed.

Now, every connected component of \(F\) is non-bipartite. Let \(\mathfrak{n} = (y : y \in V(F))\). By the construction in the proof of \cite[Lemma 3.4]{MauTrung21}, there exists an integer \(d_0\) such that for every \(d \ge d_0\), there exists a monomial \(f_d\) satisfying
\[
\deg f_d = 2d - 1 \quad \text{and} \quad (\overline{J^d} : f_d) = \mathfrak{n}.
\]
We claim that \(f_d \in \overline{J^{d-1}}\). Indeed, suppose there is an odd number of non-bipartite components, say \(r = 2s + 1\), and write
\(
f_d = f_1 f_2 \cdots f_{2s+1},
\)
where \(f_i^2 \in I(G_i)^{2d_i - 1}\) and \(\deg f_i = 2d_i - 1\). Therefore,
\[
f_d^2 \in \prod_{i=1}^{2s+1} I(G_i)^{2d_i - 1} \subseteq J^{\sum_{i=1}^{2s+1} (2d_i - 1)},
\]
and since \(\sum_{i=1}^{2s+1} (2d_i - 1) = 2d - 1\), we obtain \(f_d^2 \in J^{2d-1}\). 
Suppose now that the number of non-bipartite components is even. Let \(r = 2s\), and write
\(
f_d = g f_1 \cdots f_{2s},
\)
where \(g\) is the product of the vertices of an odd cycle of length \(2m-1\) such that \(g^2 \in J^{2m-1}\), and \(f_i^2 \in I(G_i)^{2d_i-1}\). Then
\(
f_d^2 \in J^{2d-1}.
\)
Thus, in either case, \(f_d^2 \in J^{2d-1} \subseteq J^{2d-2} = (J^{d-1})^2\), so \(f_d \in \overline{J^{d-1}}\).
Fix \(a\) as above. Now, for sufficiently large \(k\), set \(d = k - a + 1\). We claim that
\[
(\overline{I(G)^k} : u f_d) = \mathfrak{p} + \mathfrak{n}.
\]
Indeed, we have
\[
(L^i \overline{J^{k-i}} : u f_d) = (L^i : u)(\overline{J^{k-i}} : f_d).
\]
Therefore,
\[
(\overline{I(G)^k} : u f_d) = \sum_{i=0}^{k} (L^i : u)(\overline{J^{k-i}} : f_d).
\]

\noindent \textbf{Case 1:} \(0 \le i \le a-1\). Then \((L^i : u) = R\), and \((\overline{J^{k-i}} : f_d) \subseteq \mathfrak{n}\). In particular, when \(i = a-1\), we have \(k-i = d\), so
\[
(L^{a-1} : u)(\overline{J^d} : f_d) = \mathfrak{n}.
\]

\noindent \textbf{Case 2:} \(a \le i \le k\). Since \(i \ge a\), we have \(L^i \subseteq \mathfrak{p}\), and \((\overline{J^{k-i}} : f_d) = R\). In particular, when \(i = a\), we have \(k-i = d-1\), so
\[
(L^a : u)(\overline{J^{d-1}} : f_d) = \mathfrak{p}.
\]

\noindent Hence,
\[
(\overline{I(G)^k} : u f_d) = \mathfrak{p} + \mathfrak{n}.
\]
Therefore,
\[
2k - 1 \le \mathrm{v}(\overline{I(G)^k}) \le 2k + b - 1 \le \mathrm{v}(I(G)^k) = 2k + c(G) - 2
\]
for all sufficiently large integers \(k\).
This completes the proof.

\end{proof}


\begin{thebibliography}{25}


\bibitem{VanmathiSarkar25}
Vanmathi. A and P. Sarkar,
\textit{$\operatorname{v}$-numbers of integral closure filtrations of monomial ideals},
arXiv preprint arXiv:2506.09051, 2025.


\bibitem{BiswasMandalSaha26}{P. Biswas, M. Mandal and K. Saha}, \textit{Asymptotic behaviour and stability index of v-numbers of graded ideals,} Vietnam Journal of Mathematics, 2026.


\bibitem{Cooper20} {S. M. Cooper, A. Seceleanu, S. O. Tohaneanu, M. Vaz Pinto and R. H. Villarreal}, \textit{ Generalized minimum
distance functions and algebraic invariants of Geramita ideals}, Adv. Appl. Math. 112 (2020), 101940,
34 pp.

\bibitem{FicarraSgroi23}{A. Ficarra and E. Sgroi}, \textit {Asymptotic behaviour of the v-number of homogeneous ideals}, J. Algebra \textbf{704} (2026), 273--297.


\bibitem{Hoa22}
L. T. Hoa,
\textit{Maximal generating degrees of integral closures of powers of monomial ideals},
J. Algebraic Combin. \textbf{56}(2), 279--304 (2022).



\bibitem{KuronyaPintye}{A. Küronya and N. Pintye}, \textit{Castelnuovo–Mumford regularity and log-canonical thresholds}, arXiv preprint, https://arxiv.org/abs/1312.7778, 2013.

\bibitem{Javadekar26}{O. Javadekar}, \textit {A comparison of the regularity of certain classes of monomial ideals and their integral closures},
Archiv der Mathematik 126 (2026), 351–363.


\bibitem{Misra26}{S. Misra}, \textit {A counterexample to a conjecture of Küronya and Pintye on regularity and integral closure},  arXiv preprint, https://arxiv.org/abs/2605.13879, 2026. 


\bibitem{KumarKumar21}{A. Kumar, R. Kumar}, \textit {Regularity comparison of symbolic powers, integral closure of powers and powers of edge ideals}, arXiv preprint, https://arxiv.org/abs/2108.08609, 2021.

\bibitem{FicarraMarques25} A. Ficarra and P. Macias Marques, \textit{The v-function of powers of sums of ideals}, J. Algebr. Combin. \textbf{62}, 13 (2025).


\bibitem{FiorindoGhosh25}{L. Fiorindo, D. Ghosh}, \textit{ On the asymptotic behaviour of the Vasconcelos invariant for graded modules}, Nagoya Math. J. (2025), 15 pp., published online
https://doi.org/10.1017/nmj.2024.33.



\bibitem{GraysonStillman} D. Grayson and M. Stillman, \textit{Macaulay2, A Software System for Research in Algebraic Geometry}, available at \url{https://www.unimelb-macaulay2.cloud.edu.au}.


\bibitem{GrisaldeReyesVillarreal21}{G. Grisalde, E. Reyes, R.H. Villarreal}, \textit {Induced matchings and the v-number of graded ideals}.
Mathematics, 9(22), 2021.


\bibitem{HerzogHibi11}{J. Herzog and T. Hibi}, {\textit {Monomial Ideals. Springer-Verlag London Limited, 2011}}


\bibitem{MandalPhukan26}{M. Mandal and P. Phukan},\textit { Asymptotic v-number of graded families of ideals  and the Newton-Okounkov region}, arXiv preprint, https://arxiv.org/abs/2603.08838, 2026

\bibitem{MauTrung21}
D. H. Mau and T. N. Trung,
\textit{Stability of associated primes and depth of integral closures of powers of edge ideals},
Preprint (2021), arXiv preprint, https://arxiv.org/abs/2108.01830.



\bibitem{MinhVu22} N. C. Minh and T. Vu, \textit{Integral closure of powers of edge ideals and their regularity}, J. Algebra \textbf{609} (2022), 120--144.


\bibitem{SimisVasconcelosVillarreal94}
A. Simis, W. V. Vasconcelos, and R. H. Villarreal,
\textit{On the Ideal Theory of Graphs},
J. Algebra \textbf{167} (1994), 389--416.


\bibitem{SimisVasconcelosVillarreal98}
A. Simis, W. V. Vasconcelos, and R. H. Villarreal,
\textit{The integral closure of subrings associated to graphs},
J. Algebra, 199(1):281--289, 1998.

\bibitem{HunekeSwanson06}{ I. Swanson and C. Huneke},{\textit{ Integral Closure of Ideals, Rings, and Modules}, London Mathematical Society Lecture Note Series, Vol. 336, Cambridge University Press, Cambridge, 2006.}

\bibitem{Vil26} R. H. Villarreal, \textit{Monomial Algebras}, Chapman and Hall/CRC, 3rd edition, 2026.


    
\end{thebibliography}
\end{document}